\numberwithin{equation}{section}
\newtheoremstyle{MyThmStyleb}
{}
{}
{}
{}
{\bfseries}
{}
{ }
{\thmname{#1\thmnumber{ #2\hspace{0.5em}}}\thmnote{(#3)}}
\newtheorem{claim}{\bf \t}[part]
\theoremstyle{plain}
\newtheorem{theorem}{Theorem}[section]
\newtheorem{proposition}[theorem]{Proposition}
\theoremstyle{definition}
\newtheorem{example}{Example}
\newtheorem{definition}{Definition}[section]
\theoremstyle{MyThmStyleb}
\newtheorem{test}{Test}
\theoremstyle{remark}
\newtheorem{remark}{Remark}[section]
\def\ep{\epsilon}
\def\pt{\partial}
\def\br{\bar{\rho}}
\def\bu{\bar{u}}
\def\bE{\bar{E}}
\def\bff{\mathbf{f}}
\def\bg{\mathbf{g}}
\def\bw{\mathbf{w}}
\def\bB{\mathbf{B}}
\def\bC{\mathbf{C}}
\def\bF{\mathbf{F}}
\def\bD{\mathbf{D}}
\def\bA{\mathbf{A}}
\def\bI{\mathbf{I}}
\def\bM{\mathbf{M}}
\def\bG{\mathbf{G}}
\def\bH{\mathbf{H}}
\def\bfm{\mathbf{m}}
\def\bfq{\mathbf{q}}
\def\bfv{\mathbf{v}}
\def\bfu{\mathbf{u}}
\def\by{\mathbf{y}}
\def\bfU{\mathbf{U}}
\def\tbB{\widetilde{\mathbf{B}}}
\def\fE{\mathcal{E}}
\def\tbw{\widetilde{\mathbf{w}}}
\def\diag{\mathrm{diag}}
\def\sign{\mathrm{sign}}
\def\wbB{\widetilde{\mathbf{B}}}
\def\wbC{\widetilde{\mathbf{C}}}
\def\wbD{\widetilde{\mathbf{D}}}
\def\tw{\widetilde{w}}
\def\bh{\mathbf{h}}
\def\fL{\mathcal{L}}
\def\bz{\mathbf{z}}
\def\tB{\widetilde{B}}
\newcommand \R{\mathbb{R}}
\theoremstyle{plain}
\begin{document}

\title[]
{Relaxation schemes for entropy dissipative system of viscous conservation laws}
\author{Tuowei Chen}
\address{Tuowei Chen, Institute of Applied Physics and Computational Mathematics, 100088 Beijing, P. R. China;}
\email{tuowei_chen@163.com}
\author{Jiequan Li}
\address{Jiequan Li, Academy for Multidisciplinary Studies, Capital Normal University, 100048 Beijing, P. R. China, and State Key Laboratory for Turbulence Research and Complex System, Peking University, 100871 Beijing, P. R. China;}
\email{jiequan@cnu.edu.cn}

\date{\today}

\begin{abstract}
In this paper, we introduce a hyperbolic model for entropy dissipative system of viscous conservation laws via a flux relaxation approach. We develop numerical schemes for the resulting hyperbolic relaxation system by employing the finite-volume methodology used in the community of hyperbolic conservation laws, e.g., the generalized Riemann problem method. For fully discrete schemes for the relaxation system of scalar viscous conservation laws, we show the asymptotic preserving property in the coarse regime without resolving the relaxation scale and prove the dissipation property by using the modified equation approach. Further, we extend the idea to the compressible Navier-Stokes equations. Finally, we display the performance of our relaxation schemes by a number of numerical experiments. 
\end{abstract}
\keywords{Relaxation schemes, viscous conservation laws, entropy, Chapman-Enskog expansion, asymptotic preserving property, GRP solver}
\subjclass[2020]{
	49M20, 65M08, 76M12, 35Q30, 76N15
}

\maketitle

\section{Introduction}
In this paper, we study a class of multi-dimensional viscous conservation laws that are entropy dissipative, including the Navier-Stokes equations. The basic idea is to employ the flux relaxation approach. For any given entropy dissipative system of viscous conservation laws, we construct a hyperbolic system with stiff relaxation terms that approximates the original system such that we can employ the finite-volume (FV) methodology developed for conservation laws. 

In the past decades, relaxation methods have gained significant popularity as effective tools in designing numerical schemes for gas dynamics. For example, Jin and Xin \cite{JX1995} proposed the relaxation schemes for general systems of conservation laws via the flux relaxation approach. Xu \cite{Xu2001} developed the gas-kinetic scheme (GKS) for the Navier-Stokes equations based on the Bhatnagar-Gross-Krook (BGK) kinetic model which includes a relaxation collision term reflecting the viscosity and heat conduction effects. Nishikawa \cite{Nishikawa2011} constructed a hyperbolic Navier-Stokes model with relaxation for the diffusive flux and investigated numerical methodologies for computing steady-state solutions. 

Recently, based on the Lax-Wendroff (LW) methodology \cite{LW1960} in the FV framework, temporal-spatial coupling numerical methods exhibit compact, robust and efficient for the computation of compressible fluid flows \cite{Li2019,LD2016,PXLL2016,LL2023}. However, due to the change of governing equations from hyperbolic to hyperbolic-parabolic type, it is still a challenging task to extend the FV framework for hyperbolic conservation laws to solving the Navier-Stokes equations by applying LW type Euler solvers, such as the generalized Riemann problem (GRP) solver \cite{BF2003,BL2007,BLW2006}.

The idea of this paper can be illuminated by considering the Cauchy problem for the 1-D scalar viscous conservation laws
\begin{equation}
	u_t+f(u)_x=(\mu\phi(u) u_{x})_x, \,\,\,\,\, (x,t)\in \R\times\R_+, \label{sc}
\end{equation} 
with initial data
\begin{equation}
	u(x,0)=u_0(x).     \label{scini}
\end{equation}
Here, $u\in \mathcal{D}\subset\R$ with $\mathcal{D}$ denoting the state domain, $f(u)$ and $\phi(u)\geq 0$ are smooth functions of $u$, and $\mu$ is a positive constant. Note that \eqref{sc} has an entropy dissipative nature: for any function $\eta(u)$ with $\eta^{\prime\prime}(u)>0$, \eqref{sc} is endowed with an entropy-entropy flux pair $\big(\eta(u),g(u)\big)$, where $g^{\prime}(u)=\eta^\prime(u)f^\prime(u)$, such that for smooth solutions of \eqref{sc}, the entropy inequality holds, 
\begin{equation}
	\eta(u)_t+g(u)_x-(\eta^\prime(u)\mu\phi(u)u_x)_x=-\eta^{\prime\prime}(u)\mu\phi(u)u^2_x\leq 0.
\end{equation}

Inspired by the Jin-Xin method \cite{JX1995} and the GKS method \cite{Xu2001}, we introduce a hyperbolic system for \eqref{sc}--\eqref{scini} via relaxation for both the convective fluxes and the diffusive fluxes:
\begin{equation}
	\left.\begin{cases}
		u_t+v_x=0,\\
		v_t+(\frac{\mu\phi(u)}{\epsilon}+a^2)u_x=\frac{f(u)-v}{\epsilon},
	\end{cases}\label{re}
	\right.
\end{equation}
with the initial data
\begin{equation}
	u(x,0)=u_0(x),\,\,\,\,
	v(x,0)=v_0(x):= f(u_0(x))-\mu(u_{0}(x)) u^{\prime}_{0}(x),\label{initial}
\end{equation}
where $v\in\R$ is an artificial variable, $\epsilon$ is a small positive parameter standing for the relaxation time, and $a$ is a constant that will be chosen later. Hereafter we call \eqref{re}--\eqref{initial} the relaxation system. Similar to the Jin-Xin relaxation system for conservation laws \cite{JX1995}, in the relaxation limit $(\epsilon \to 0+)$, the relaxation system \eqref{re} can be approximated to leading order by
\begin{equation}
	\left.\begin{cases}
		v=f(u)-\mu\phi(u) u_x,\\
		u_t+f(u)_x-(\mu\phi(u) u_{x})_x=0.
	\end{cases}\right.\label{re1}
\end{equation}
The state satisfying $\eqref{re1}_1$ is called the local equilibrium. $\eqref{re1}_2$ is exactly the original viscous conservation law \eqref{sc}. For smooth solutions, the first order approximation to \eqref{re} reads
\begin{equation}
	\begin{aligned}
		u_t+f(u)_x=&\left((\mu\phi(u)+\ep(a^2-f^\prime(u)^2))u_{x}\right)_x\\
		&+\epsilon\Big(f^\prime(u)(\mu\phi(u)u_{x})_x
		+\big(\mu\phi(u)(f(u)-\mu\phi(u)u_{x})_x\big)_{x}\Big)_x,
	\end{aligned}\label{re2}
\end{equation}
which is derived by using the Chapman-Enskog expansion method \cite{CE1990} and the detailed derivation of \eqref{re2} will be shown later in \cref{Leading} for a general case. It is expected that \eqref{re2} should preserve the entropy dissipative nature of the original equation \eqref{sc}. Therefore, to ensure the dissipation property of \eqref{re2}, we require that the coefficient $\mu\phi(u)+\ep(a^2-f^\prime(u)^2)\geq0$, meaning that $a$ should be chosen such that
\begin{equation}
	-\sqrt{\mu\phi(u)/\epsilon+a^2}\leq f^\prime(u)\leq \sqrt{\mu\phi(u)/\epsilon+a^2},\,\,\,\,\,\,\forall\, u\in\mathcal{D}.    \label{1Dsc3}
\end{equation}
Then, it is expected that appropriate numerical discretizations (hereafter called the relaxation scheme) to the relaxation system \eqref{re} yield accurate approximations of the original equation \eqref{sc} when $\epsilon$ is sufficiently small. We will develop relaxation schemes for viscous conservation laws in the FV framework and then extend the idea to the compressible Navier-Stokes equations. As an example, an implicit-explicit GRP (IMEX-GRP) method will be proposed to overcome the stiffness of the source term.
\begin{remark}
	For the inviscid case that $\mu = 0$, the condition \eqref{1Dsc3} is referred to as the subcharacteristic condition \cite{Liu1987}, i.e. $|f^\prime(u)|\leq |a|$. 
\end{remark}

Novelties of this work are highlighted below. We propose hyperbolic relaxation systems for general viscous conservation laws that are entropy dissipative via a flux relaxation approach. This method leads to a simplified discretization: (i) FV methods developed for hyperbolic conservation laws can be directly used, leading to the coupling of convection and viscosity effects in the flux evolution; (ii) Discretizations to diffusion terms, which generally require an extended stencil, can be avoided. Due to the hyperbolic nature of the relaxation system, the time step $\Delta t$ of explicit schemes is determined by the Courant-Friedrichs-Lewy (CFL) condition, which reads $\Delta t=O(\Delta x)$ with $\Delta x$ denoting the cell size. It is different from the CFL condition for traditional explicit schemes (see \cite{CS1998,DY2022} for examples) of the original diffusive system, which requires $\Delta t=O(\Delta x^2)$.

The rest of this paper is organized as follows. In \Cref{secresys}, we introduce the relaxation system for general entropy dissipative system of viscous conservation laws. In \Cref{secscalar}, we develop relaxation schemes for 1-D scalar viscous conservation laws in the FV framework, and discuss the asymptotic preserving property and dissipation property. In \Cref{secNS}, the relaxation schemes are extended to the compressible Navier-Stokes equations, including both 1-D and 2-D cases. Numerical results are presented in \Cref{sectest} to illustrate the performance of the current scheme. The last section presents discussions.

\section{The relaxation system} \label{secresys}
In this section, we introduce the relaxation system for general entropy dissipative system of viscous conservation laws.
\subsection{Entropy dissipative system of viscous conservation laws}
We consider the Cauchy problem for a $d$-dimensional ($d\geq1$) entropy dissipative system in the form of viscous conservation laws \cite{Serre2010}: 
\begin{equation}
	\begin{cases}
		\pt_t \bfu+\nabla\cdot \bff(\bfu)= \nabla\cdot(\mu\bB(\bfu)\nabla \bfu), 
		\quad\quad	(x,t)\in \R^d\times\R_+,\\
		\bfu(x,0)=\bfu_0(x), \quad\quad x\in \R^d,
	\end{cases}  \label{sys}
\end{equation}
where $\bfu: \R^d\times\R_+\rightarrow \mathcal{D}\subset\R^n$ is the density function, with $\mathcal{D}$ denoting the state domain, and $\bff=(\bff_1,\dots,\bff_d):\mathcal{D}\rightarrow(\R^n)^d$ is the flux function. The viscosity tensor is denoted by $\mu\bB(\bfu)\nabla \bfu$, where $\bB=(B_{ij})_{d\times d}\in (\R^{n\times n})^{d\times d}$ is a tensor playing the role of a linear map from the space of $n\times d$ matrices into itself. The viscosity tensor is therefore
\begin{equation}
	\nabla\cdot(\mu\bB(\bfu)\nabla \bfu)
	:=\sum\limits_{1\leq i,j \leq d}\pt_{x_i}(\mu B_{ij}(\bfu)\pt_{x_j}\bfu),
\end{equation}
with $\mu$ being a positive constant and $B_{ij}:\mathcal{D}\rightarrow \R^{n\times n}$ being matrix valued functions. 

\begin{definition}
	We say that system \eqref{sys} is entropy dissipative in $\mathcal{D}$ if it is endowed with a convex entropy function $\eta(\bfu)$ and the associated entropy flux $\bg(\bfu)=(\bg_1(\bfu),\dots,\bg_d(\bfu))$ such that
	\begin{equation}
		\eta_{\bfu\bfu}(\bfu)>0,
		\quad \nabla_\bfu \bg(\bfu)=\eta_\bfu^\top(\bfu)\nabla_\bfu \bff(\bfu),
		\quad(\bB^\eta(\bfu))^\top=\bB^\eta(\bfu)\geq 0,
		\quad \forall \bfu\in\mathcal{D}. \label{conentropy}
	\end{equation} 
	Here, $\eta_\bfu(\bfu):=\nabla_\bfu \eta(\bfu)$ stands for the gradient of $\eta(\bfu)$, $\eta_{\bfu\bfu}(\bfu):=\nabla^2_{\bfu}\eta(\bfu)$ is the Hessian matrix of $\eta(\bfu)$, $\nabla_\bfu \bff=(\nabla_\bfu \bff_1,\dots,\nabla_\bfu \bff_d) \in (\R^{n\times n})^d$ is the Jacobian matrix of $\bff(\bfu)$, and 
	\begin{equation}
		\bB^\eta=(B^\eta_{ij})_{d \times d}\in (\R^{n\times n})^{d\times d}\cong \R^{nd\times nd},
		\quad B^\eta_{ij}:=\eta_{\bfu\bfu} B_{ij}\in \R^{n\times n}.
		\label{Beta}
	\end{equation}	
	We note that $\eta_{\bfu\bfu}>0$ and $(\bB^\eta)^\top=\bB^\eta \geq 0$ mean that $\eta_{\bfu\bfu}$ is positive definite and $\bB^\eta$ is symmetric positive semi-definite, respectively.
\end{definition}

Left-multiplying $\eta_\bfu^\top$ on both sides of $\eqref{sys}_1$, we observe that the smooth solution of system \eqref{sys} satisfies the entropy inequality
\begin{equation}
	\pt_t \eta+\nabla\cdot\bg-\mu\nabla\cdot\langle\eta_\bfu, \bB\nabla \bfu\rangle
	=-\mu\langle \pt_{x_i}\bfu,\eta_{\bfu\bfu} B_{ij}\pt_{x_j} \bfu\rangle\leq 0,
	\quad \forall \bfu\in\mathcal{D},  \label{entropyine}
\end{equation}
where $\langle\cdot,\cdot\rangle$ stands for the usual Euclidean inner product in $\R^n$.

\begin{remark}
	When \eqref{sys} is linear symmetric, it is also referred to as the equations of symmetric hyperbolic-parabolic type \cite{UKS1984}.
\end{remark}

\subsection{The relaxation system for entropy dissipative viscous conservation laws}
We introduce the relaxation system for \eqref{sys}:
\begin{equation}
	\left.\begin{cases}
		\pt_t \bfu+\nabla\cdot(\bC\bfv)=0, \\
		\pt_t \bfv+(\frac{\mu}{\epsilon}\bB(\bfu)+\bD)\nabla \bfu=\frac{1}{\epsilon}(\bff(\bfu)-\bC\bfv).\label{sys1}
	\end{cases}
	\right.
\end{equation}
with the initial data
\begin{equation}
	\bfu(x,0)=\bfu_0(x),\,\,\,\,\bfv(x,0)=\bfv_0(x):= \bC^{-1}[\bff(\bfu_0(x))-\mu\bB(\bfu_0)\nabla \bfu_{0}(x)].
\end{equation}
Here, $\bfv=(\bfv_1,\dots,\bfv_d)^\top\in(\R^{n})^d$ is an artificial matrix valued variable, $\epsilon$ is a small positive parameter standing for the relaxation time, $\bC=(C_{ij})_{d\times d}\in (\R^{n\times n})^{d\times d}$ and $\bD=(D_{ij})_{d\times d}\in (\R^{n\times n})^{d\times d}$ are two non-singular constant tensors to be chosen later, and $C_{ij}, D_{ij}\in \R^{n\times n}$ for $1\leq i,j \leq d$.

\begin{proposition}  \label{Leading}
	Let $\bfU=(\bfu,\bfv)^\top$ be a smooth solution of \eqref{sys1}. For small $\epsilon$, we consider the Chapman-Enskog expansion to $\bfU$:
	\begin{equation}
		\bfU(x,t)=\bfU^{(0)}(x,t)+\epsilon\bfU^{(1)}(x,t)+O(\epsilon^2). \label{expansion}
	\end{equation}
	Then, the leading order solution $\bfU^{(0)}$ satisfies 
	\begin{equation}
		\begin{aligned}
			&\bC\bfv^{(0)}=\bff(\bfu^{(0)})-\mu\bB(\bfu^{(0)})\nabla \bfu^{(0)}, \\
			&\pt_t \bfu^{(0)}+\nabla\cdot \bff(\bfu^{(0)})= \nabla\cdot (\mu\bB(\bfu^{(0)})\nabla \bfu^{(0)}). \label{equilibrium}
		\end{aligned}
	\end{equation}
\end{proposition}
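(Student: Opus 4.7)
The plan is to substitute the Chapman-Enskog ansatz \eqref{expansion} directly into the relaxation system \eqref{sys1} and collect powers of $\epsilon$. The main conceptual move is to first rewrite $\eqref{sys1}_2$ in a form that isolates the stiff terms, namely
\begin{equation*}
\mathbf{C}\mathbf{v} = \mathbf{f}(\mathbf{u}) - \mu \mathbf{B}(\mathbf{u})\nabla \mathbf{u} - \epsilon\bigl(\partial_t \mathbf{v} + \mathbf{D}\nabla \mathbf{u}\bigr),
\end{equation*}
obtained by multiplying $\eqref{sys1}_2$ by $\epsilon$ and rearranging. This puts the equation in the standard form convenient for a Hilbert/Chapman-Enskog expansion, where the right-hand side is treated as an $O(\epsilon)$ perturbation of the local equilibrium relation.

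Next, I would substitute $\mathbf{u}=\mathbf{u}^{(0)}+\epsilon\mathbf{u}^{(1)}+O(\epsilon^2)$ and $\mathbf{v}=\mathbf{v}^{(0)}+\epsilon\mathbf{v}^{(1)}+O(\epsilon^2)$ into this rewritten equation, Taylor-expand $\mathbf{f}(\mathbf{u})$ and $\mathbf{B}(\mathbf{u})$ around $\mathbf{u}^{(0)}$, and match the $O(1)$ terms. Since $\mathbf{C}$ is a nonsingular constant tensor, the $O(1)$ balance gives immediately
\begin{equation*}
\mathbf{C}\mathbf{v}^{(0)} = \mathbf{f}(\mathbf{u}^{(0)}) - \mu \mathbf{B}(\mathbf{u}^{(0)})\nabla \mathbf{u}^{(0)},
\end{equation*}
which is the first assertion of \eqref{equilibrium}. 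This is the local equilibrium manifold, analogous to $\eqref{re1}_1$ in the scalar case.

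Then I would substitute the same ansatz into the first (nonstiff) equation $\eqref{sys1}_1$, which reads $\partial_t \mathbf{u}+\nabla\cdot(\mathbf{C}\mathbf{v})=0$ without any $\epsilon$ prefactor. Collecting the $O(1)$ terms yields $\partial_t \mathbf{u}^{(0)}+\nabla\cdot(\mathbf{C}\mathbf{v}^{(0)})=0$, and feeding in the already-derived expression for $\mathbf{C}\mathbf{v}^{(0)}$ produces the second line of \eqref{equilibrium}, i.e. the original viscous conservation law \eqref{sys} for $\mathbf{u}^{(0)}$.

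There is no real obstacle here: the argument is a direct power-matching calculation that uses only the invertibility of $\mathbf{C}$ and smoothness of $\mathbf{U}$. The only subtlety worth flagging in the proof is that the $\epsilon(\partial_t \mathbf{v}+\mathbf{D}\nabla\mathbf{u})$ term contributes only at order $\epsilon$ and higher, hence is invisible at leading order; the coefficient $\mathbf{D}$ (together with the higher derivative structure) will reappear once one pushes the expansion to $O(\epsilon)$ in order to obtain \eqref{re2}, but that is the content of the subsequent analysis rather than of this proposition.
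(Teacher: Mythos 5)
Your proposal is correct and follows essentially the same route as the paper's own proof: multiply $\eqref{sys1}_2$ by $\epsilon$ to isolate $\bC\bfv$, Taylor-expand $\bff$ and $\bB$ about $\bfu^{(0)}$ to see that the $O(1)$ balance gives the local equilibrium relation, and then feed this into the $O(1)$ part of $\eqref{sys1}_1$ to recover the viscous conservation law for $\bfu^{(0)}$. No gaps to report.
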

\begin{proof}
	Substituting \eqref{expansion} into \eqref{sys1}, we have
	\begin{equation}
		\pt_t \bfu^{(0)}+\nabla\cdot (\bC\bfv^{(0)})
		=-\ep\big(\pt_t \bfu^{(1)}+\nabla\cdot (\bC\bfv^{(1)})\big)+O(\ep^2)=O(\ep),\label{expansion1}
	\end{equation}
	and
	\begin{equation}
		\begin{aligned}
			&\bC\bfv^{(0)}-\bff(\bfu^{(0)})+\mu\bB(\bfu^{(0)})\nabla \bfu^{(0)}\\
			=&-\ep\bC\bfv^{(1)}-\ep(\pt_t \bfv+\bD\nabla \bfu) +(\bff(\bfu)-\bff(\bfu^{(0)}))
			-(\mu\bB(\bfu)\nabla \bfu-\mu\bB(\bfu^{(0)})\nabla \bfu^{(0)})\\
			=&O(\ep)+\nabla_{\bfu}\bff(\bfu^{(0)})(\bfu-\bfu^{(0)})-(\mu\bB(\bfu)-\mu\bB(\bfu^{(0)}))\nabla \bfu-\mu\bB(\bfu^{(0)})(\nabla \bfu-\nabla \bfu^{(0)})\\
			=&O(\ep). \label{expansion2}
		\end{aligned}
	\end{equation}
	Note that \eqref{expansion2} implies $\eqref{equilibrium}_1$. Substituting \eqref{expansion2} into \eqref{expansion1} derives $\eqref{equilibrium}_2$. 
	
	The proof is completed.
\end{proof}

For $\bfU=(\bfu,\bfv)^\top\in \R^{n+nd}$, the system \eqref{sys1} can be rewritten as 
\begin{equation}
	\pt_t\bfU + \sum_{j} \bM_j(\bfU)\pt_{x_j}\bfU=\bH(\bfU).\label{sys2}
\end{equation}
The source term is $\bH(\bfU)=(\mathbf{0},\frac{1}{\ep}(\bff(\bfu)-\bC\bfv))\in \R^{n+nd}$, and the tensor $\bM_j(\bfU)$ reads
\begin{equation}
	\begin{aligned}
		\bM_j(\bfU):=
		\left(
		\begin{array}{cc}
			0			&(\wbC_j)^\top  \\
			\frac{\mu}{\epsilon}\wbB_j(\bfu)+\wbD_j\,\,			& 0  \\
		\end{array}
		\right)\in \R^{n(1+d)\times n(1+d)},\,\,\,\,\,\,\,\,\,\,\,\,1\leq j\leq d,
	\end{aligned}
\end{equation}
where
\begin{equation}
	\small
	\begin{aligned}
		\wbB_j:=(B_{1j},\dots,B_{dj}),\, \wbC_j:=(C_{1j},\dots,C_{dj}),\, \wbD_j:=(D_{1j},\dots,D_{dj})\in (\R^{n \times n})^d. \label{BCD}
	\end{aligned}
\end{equation}

For $\bfu\in\mathcal{D}$, $\bC$ and $\bD$ should be chosen suitably to ensure the hyperbolicity and the entropy dissipative nature of \eqref{sys2}. 
\begin{proposition}
	There exist non-singular constant tensors $\bC$ and $\bD$ such that the relaxation system \eqref{sys2} is hyperbolic and the following condition holds:
	\begin{equation}
		\mu\bB^\eta+\epsilon(\bD-\bC^{-1}\bA\otimes\bA)^\eta>0,
		\quad\quad\quad\forall\bfu\in\mathcal{D}.
		\label{consys}
	\end{equation}
	Here, $(\bD-\bC^{-1}\bA\otimes\bA)^\eta$ is defined in the same manner as $\bB^\eta$ $($see \eqref{Beta}$)$, where $\bA:= \nabla_\bfu \bff\in (\R^{n\times n})^d$, i.e. $\bA=(A_1,\cdots,A_d)$ with $A_i=\nabla_\bfu \bff_i\in \R^{n\times n}$, and $\bA\otimes\bA=((\bA\otimes\bA)_{ij})_{d\times d}\in (\R^{n\times n})^{d\times d}$ with $(\bA\otimes\bA)_{ij}= A_{i}A_{j}\in \R^{n\times n}$. 
	
	In addition, the system \eqref{sys1} is entropy dissipative in the sense that the smooth solution of \eqref{sys1} satisfies the entropy inequality
	\begin{equation}
		\begin{aligned}
			&\pt_t \eta+\nabla\cdot \bg- \nabla\cdot\left\langle\eta_\bfu, \big[\mu\bB+\epsilon\big(\bD-\bC^{-1}\bA\otimes\bA\big)\big]\nabla \bfu\right\rangle +O(\epsilon\mu)+O(\epsilon^2)\\
			=&-\left\langle \nabla \bfu,\eta_{\bfu\bfu}\big[\big(\mu\bB+\epsilon(\bD-\bC^{-1}\bA\otimes\bA)\big)\nabla \bfu\big]\right\rangle\\
			=&-\left\langle \nabla \bfu,\big(\mu\bB^\eta+\epsilon(\bD-\bC^{-1}\bA\otimes\bA)^\eta\big)\nabla \bfu\right\rangle\leq 0, \quad\quad\quad\forall\bfu\in\mathcal{D}. \label{entropyineq}
		\end{aligned}
	\end{equation}
\end{proposition}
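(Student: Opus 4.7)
The plan is to prove the proposition in two parts: (a) exhibit explicit tensors $\bC,\bD$ making \eqref{sys2} hyperbolic and satisfying the bound \eqref{consys}, and (b) derive the entropy inequality \eqref{entropyineq} via a first-order Chapman--Enskog expansion combined with the compatibility relations \eqref{conentropy}. For (a), I would extend the scalar ansatz \eqref{re} by taking $\bC$ to be the identity tensor, $C_{ij}=\delta_{ij}\bI_n$, and $\bD=a^2\bC$ for a scalar $a>0$ to be fixed later. Then the $(i,j)$-block of $\bC^{-1}\bA\otimes\bA$ is $A_iA_j$, and the $(i,j)$-block of the left-hand side of \eqref{consys} reads $\mu B_{ij}^\eta+\epsilon\bigl(a^2\delta_{ij}\eta_{\bfu\bfu}-\eta_{\bfu\bfu}A_iA_j\bigr)$. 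Since $\bff,\eta$ are smooth and $\mathcal{D}$ can be taken bounded, $\|\eta_{\bfu\bfu}\|$ and $\|\bA\|$ admit uniform bounds; combined with $\bB^\eta\geq0$, this gives \eqref{consys} for all sufficiently large $a$.

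For hyperbolicity, I would freeze $\bfu\in\mathcal{D}$ and analyze the principal symbol $\bM(\xi):=\sum_{j=1}^d\xi_j\bM_j$ in direction $\xi\in\R^d\setminus\{0\}$. Its anti-block structure implies that every non-trivial eigenvalue $\lambda$ satisfies $\lambda^2\in\mathrm{spec}(\Lambda(\xi,\bfu))$ for an $n\times n$ matrix $\Lambda$, which under the choice of Step 1 reduces after a direct block computation to
$$\Lambda(\xi,\bfu)=\tfrac{\mu}{\epsilon}\sum_{i,j}\xi_i\xi_j B_{ij}(\bfu)+a^2|\xi|^2\bI_n.$$
Because $\bB^\eta$ is symmetric positive semi-definite, applying the quadratic form $\langle v,\bB^\eta v\rangle\geq 0$ to vectors of the form $v=(\xi_i w)_i$ with $w\in\R^n$ shows that $\sum_{i,j}\xi_i\xi_j B_{ij}^\eta=\eta_{\bfu\bfu}\sum_{i,j}\xi_i\xi_j B_{ij}$ is symmetric positive semi-definite. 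Hence $\sum_{i,j}\xi_i\xi_j B_{ij}$ is similar, via $\eta_{\bfu\bfu}^{-1/2}$, to a symmetric positive semi-definite matrix and therefore has real nonnegative spectrum; adding $a^2|\xi|^2\bI_n$ preserves this, so $\Lambda(\xi,\bfu)$ has real nonnegative spectrum, and $\bM(\xi)$ has real eigenvalues together with a complete eigenbasis.

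For (b), I would push \Cref{Leading} one order further in $\epsilon$. Using $\bC\bfv^{(0)}=\bff(\bfu^{(0)})-\mu\bB\nabla\bfu^{(0)}$ and the leading-order balance $\pt_t\bfu^{(0)}=-\nabla\cdot\bff(\bfu^{(0)})+O(\mu)$, differentiating the equilibrium relation in time and substituting into the second equation of \eqref{sys1} at order $\epsilon$ yields
$$\bC\bfv^{(1)}=\bC^{-1}\bA\otimes\bA\,\nabla\bfu^{(0)}-\bD\nabla\bfu^{(0)}+O(\mu),$$
hence the first-order modified equation
$$\pt_t\bfu+\nabla\cdot\bff(\bfu)=\nabla\cdot\bigl\{[\mu\bB+\epsilon(\bD-\bC^{-1}\bA\otimes\bA)]\nabla\bfu\bigr\}+O(\epsilon\mu)+O(\epsilon^2).$$
Left-multiplying by $\eta_\bfu^\top$, using $\nabla_\bfu\bg=\eta_\bfu^\top\nabla_\bfu\bff$ to turn the convective contribution into $\nabla\cdot\bg$, and shifting one derivative off the viscous flux by the product rule produces the quadratic form $-\langle\nabla\bfu,\eta_{\bfu\bfu}K\nabla\bfu\rangle$ with $K:=\mu\bB+\epsilon(\bD-\bC^{-1}\bA\otimes\bA)$; since $\eta_{\bfu\bfu}K=K^\eta$, \eqref{consys} delivers the non-positivity claimed in \eqref{entropyineq}. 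The main obstacle is the Chapman--Enskog bookkeeping: one must verify that $\pt_t\bfv^{(0)}$ contributes exactly the ``anti-diffusive'' correction $-\bC^{-1}\bA\otimes\bA\,\nabla\bfu$ by carefully separating $O(\mu)$, $O(\epsilon)$, $O(\epsilon\mu)$ and $O(\epsilon^2)$ terms, after which the entropy manipulation mirrors the derivation of \eqref{entropyine} verbatim.
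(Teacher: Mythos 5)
Your proposal is correct and follows essentially the same route as the paper's own proof: the same choice $\bC=\bI_{nd}$, $\bD=a^2\bI_{nd}$ with $a$ large, the same block-determinant reduction of the characteristic polynomial to the spectrum of $\tfrac{\mu}{\epsilon}\sum_{i,j}\xi_i\xi_jB_{ij}+a^2\bI_n$ (made real via positivity of $\eta_{\bfu\bfu}$ and the test vectors $(\xi_i w)_i$ in the quadratic form of $\bB^\eta$), and the same first-order Chapman--Enskog derivation of the modified equation followed by left-multiplication with $\eta_\bfu^\top$. The only differences are presentational (conjugation by $\eta_{\bfu\bfu}^{\pm1/2}$ in place of the paper's symmetric-times-positive-matrix eigenvalue fact, and an explicit $\bfv^{(1)}$ bookkeeping in place of the paper's direct expansion of $\bC\bfv$), neither of which changes the argument.
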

\begin{remark}
	For system \eqref{sys} in the low viscosity regime, i.e. $0<\mu=\frac{1}{Re}\ll1$ with $Re$ being the Reynolds number, if we choose $\epsilon=O(\mu^{1+\alpha})$ with some constant $\alpha>0$, both terms $O(\mu\epsilon)$ and $O(\epsilon^2)$ in inequality \eqref{entropyineq} are high-order small quantities that can be neglected.
\end{remark}
\begin{proof}
	Denote by $\bI_{nd}$ the $nd \times nd$ identity matrix and let 
	\begin{equation}
		\bC=\bI_{nd},\,\,\,\bD=a^2 \bI_{nd} \label{CD},
	\end{equation}	
	where $a$ is a positive constant. Then, for $\bfu\in\mathcal{D}$, the condition \eqref{consys} is satisfied by setting $a$ large enough.
	
	Applying the Chapman-Enskog expansion to $\eqref{sys1}$ leads to
	\begin{equation}
		\begin{aligned}
			\bC\bfv=&\bff-\mu\bB\nabla \bfu-\epsilon(\pt_t\bfv+\bD \nabla \bfu)\\
			=&\bff-\mu\bB \nabla \bfu
			-\epsilon\big(\bC^{-1}\pt_t(\bff-\mu\bB\nabla \bfu)+\bD \nabla \bfu\big)+O(\epsilon^2).  \label{CEv}
		\end{aligned}
	\end{equation}
	Substituting \eqref{CEv} into $\eqref{sys1}_1$, we have
	\begin{equation}
		\begin{aligned}
			0=&\pt_t \bfu+\nabla\cdot[\bff-(\mu\bB +\epsilon \bD)\nabla \bfu -\epsilon\bC^{-1}\pt_t(\bff-\mu\bB\nabla \bfu)]+O(\epsilon^2)\\
			=&\pt_t \bfu+\nabla\cdot [\bff-(\mu\bB +\epsilon \bD)\nabla \bfu
			+\epsilon \bC^{-1}\bA\nabla\cdot(\bC\bfv)]\\
			&+\epsilon\pt_t\nabla\cdot(\bC^{-1}\mu\bB\nabla \bfu)+O(\epsilon^2)\\
			=&\pt_t \bfu+\nabla\cdot \bff
			-\nabla\cdot \big [ \big(\mu\bB+\epsilon \bD-\epsilon \bC^{-1}\big(\bA\otimes\bA\big)\big)\nabla\bfu\big]\\
			&-\epsilon \nabla\cdot [ \bC^{-1}\bA\nabla\cdot(\mu\bB\nabla \bfu)]
			+\pt_t\nabla\cdot(\bC^{-1}\mu\bB\nabla \bfu)+O(\epsilon^2)\\
			=&\pt_t \bfu+\nabla\cdot \bff
			-\nabla\cdot \left[\left(\mu\bB+\epsilon \bD-\epsilon \bC^{-1}\big(\bA\otimes\bA\big)\right)\nabla \bfu \right]+O(\epsilon\mu)+ O(\epsilon^2).\label{sys3}
		\end{aligned}
	\end{equation}
	Thus, left-multiplying $\eta_\bfu^\top$ on both sides of \eqref{sys3}, together with using \eqref{consys}, we can derive \eqref{entropyineq} by a direct calculation.
	
	Next, to show the hyperbolicity of \eqref{sys1}, it suffices to prove that for any $\xi=(\xi_1,\dots,\xi_d)\in \R^d$ satisfying $|\xi|^2=1$, the matrix $\sum^d_{j}\xi_j\bM_j$ has real eigenvalues. In the remaining part of the proof, for simplicity we employ the Einstein summation convention, summation over repeated indices. Denoting by $\lambda$ the eigenvalue of $\xi_j\bM_j$, we have
	\begin{equation}
		\begin{aligned}
			\small
			0=&\mathrm{det}(\lambda \bI_{n(d+1)}-\xi_j\bM_j)
			=\left|
			\begin{array}{cc}
				\lambda \bI_{n}			&-\xi_j(\wbC_j)^\top \\
				-\xi_j(\frac{\mu}{\epsilon}\wbB_j+ \wbD_j)\,\,	    	& \lambda \bI_{nd}  \\
			\end{array}
			\right|\\
			=&\lambda^{n(d+1)}\left|
			\begin{array}{cc}
				\bI_n-\frac{1}{\lambda^2}(\frac{\mu\xi_i\xi_j}{\epsilon}B_{ij}+a^2\bI_n)\,&0  \\
				-\frac{1}{\lambda}\xi_j(\frac{\mu}{\epsilon}\wbB_j+ a^2\bI_n)\,\,&  \bI_{nd}\\
			\end{array}
			\right|\\
			=&\lambda^{n(d-1)}\left|\lambda^2 \bI_n-\big(\frac{\mu\xi_i\xi_j}{\epsilon}B_{ij}+a^2\bI_n\big)\right|,\label{chara}
		\end{aligned}
	\end{equation}
	where we have recalled \eqref{BCD} and \eqref{CD}. Thus, it suffices to show that all the eigenvalues of the matrix $\frac{\mu\xi_i\xi_j}{\epsilon}B_{ij}+a^2\bI_n$ are positive. For $\by\in \R^n$, let $\bz=(\xi_1\by,\cdots,\xi_d\by)\in\R^{nd}$. Then, it follows from \eqref{conentropy} and \eqref{Beta} that
	\begin{equation}
		\by^\top(\xi_i\xi_j\eta_{\bfu\bfu}B_{ij})\by
		=\bz^\top\bB^\eta\bz\geq0.
	\end{equation}
	which implies that
	\begin{equation}
		\xi_i\xi_j\eta_{\bfu\bfu}B_{ij}\geq 0.  \label{Bij}
	\end{equation}
	Combining \eqref{conentropy} and \eqref{Bij} leads to
	\begin{equation}
		\frac{\mu\xi_i\xi_j}{\epsilon}\eta_{\bfu\bfu}B_{ij}+a^2\eta_{\bfu\bfu}>0. \label{temp1}
	\end{equation}
	Noting that
	\begin{equation}
		\frac{\mu\xi_i\xi_j}{\epsilon}B_{ij}+a^2\bI_n=
		\eta_{\bfu\bfu}^{-1}\big(\frac{\mu\xi_i\xi_j}{\epsilon}\eta_{\bfu\bfu}B_{ij}+a^2\eta_{\bfu\bfu}\big),\,\,\,\,\,(\eta_{\bfu\bfu}^{-1})^\top =\eta_{\bfu\bfu}^{-1}>0,
	\end{equation}
	and the fact that the matrix product of a symmetric positive matrix and a positive matrix has positive eigenvalues, we obtain from \eqref{temp1} that all the eigenvalues of $\frac{\mu\xi_i\xi_j}{\epsilon}B_{ij}+a^2\bI_n$ are positive. Consequently, all the roots of equation \eqref{chara} are real.
	
	The proof is completed.
\end{proof}

\begin{remark}
	For the case $\mu=0$ and $\bC=\bI_{nd}$, the relaxation system \eqref{sys1} recovers the model of Jin and Xin \cite{JX1995}.
\end{remark}

\section{The relaxation schemes for 1-D scalar viscous conservation laws} \label{secscalar}
In this section, we discuss the discretizations for relaxation systems of 1-D scalar viscous conservation laws. Based on the FV framework, a class of relaxation schemes is designed where the numerical fluxes can be obtained by the FV solvers developed for hyperbolic conservation laws. Later in this section, taking the relaxation schemes for the 1-D viscous Burgers equation as examples, we show the asymptotic preserving (AP) property in the coarse regime without resolving the relaxation scale, and derive the dissipation property of the modified equations. Finally, an IMEX-GRP method to deal with the stiffness of the source term is developed to close this section.

\subsection{Relaxation schemes in the finite-volume framework}
Consider the 1-D scalar viscous conservation laws
\begin{equation}
	u_t+f(u)_x=(\mu\phi(u) u_x)_{x}, \,\,\, (x,t)\in \Omega\times\R_+,\, \label{1Dsc}
\end{equation} 
where $\Omega=(0,L)$ for $L>0$, $u\in \mathcal{D}\subset\R$, $f$ is a smooth function of $u$, $\phi(u)=b^{\prime}(u)\geq0$ for a smooth function $b(u)$, and $\mu$ is a positive constant. 

With $U=(u,v)^\top$ for $v\in\R$, the relaxation system for \eqref{1Dsc} can be written as 
\begin{equation}
	\frac{\partial U}{\partial t}+\frac{\pt F(U)}{\pt x}=H(U),\,
	F(U)=\begin{pmatrix}
		v\\
		\frac{\mu}{\ep}b(u)+a^2u
	\end{pmatrix},\,
	H(U)=\begin{pmatrix}
		0\\
		\frac{1}{\ep}(f(u)-v)
	\end{pmatrix},\label{1Dsc2}
\end{equation}
where $\epsilon$ is a small positive parameter and $a$ is a constant satisfying \eqref{1Dsc3}.

Note that \eqref{1Dsc2} is a hyperbolic system with stiff relaxation terms. FV methods developed for hyperbolic conservation laws can be applied directly to \eqref{1Dsc2}. In addition, implicit treatments are required to address the stiffness of relaxation terms. In the following, we introduce a class of relaxation schemes in the FV framework.

We first equally partition the domain $\Omega$ by grid points $0=x_{-1/2}<x_{1/2}<\cdots<x_{N+1/2}=L$; we define the mesh $\{\Omega_j=(x_{j-1/2},x_{j+1/2}),\,j=0,1,\cdots,N\}$ and set the mesh size $\Delta x= x_{j+1/2}-x_{j-1/2}$. Furthermore, we denote by $\{t_n\}^\infty_{n=0}$ the sequence of discretized time levels, and let $\Delta t=t_{n+1}-t_n$. Assume that the data at time $t=t_n$ are piecewise $k$-th degree polynomials $(k\geq1)$ obtained by the data reconstruction. Then, a Godunov-type method for \eqref{1Dsc2} takes the form
\begin{equation} 
	U^{n+1}_j=U^{n}_j-\frac{\Delta t}{\Delta x}(F^{n+1/2}_{j+1/2}-F^{n+1/2}_{j-1/2})+\Delta t H^{n+1/2}_j.    \label{1DGodunov}
\end{equation} 
The term $U^n_j$ is the cell average of $U$ over $\Omega_j$ at time $t=t_n$, and  $F^{n+1/2}_{j+1/2}$ is the numerical flux across the cell interface $x=x_{j+1/2}$, which can be evaluated by a GRP solver \cite{BLW2006,BL2007}, or an approximate Riemann solver, such as a Roe-type solver \cite{Roe1981}. Moreover, $H^{n+1/2}_j$ is an approximation of the stiff source term, which can be computed based on the backward Euler method or the Crank-Nicolson time discretization. 

In following of this section, we will present several simple examples of relaxation schemes for 1-D viscous conservation laws by focusing on the 1-D viscous Burgers equation \eqref{1Dsc} with $f(u)=\frac{1}{2}u^2$ and $\phi(u)\equiv1$.
\begin{example}[A first-order upwind type relaxation scheme]\label{example1}
	Assume that the data at time $t = t_n$ are piecewise constant, i.e., we have
	\begin{equation}
		U(x,t_n)=U^n_j, \quad\quad\quad x\in(x_{j-1/2},x_{j+1/2}).  \label{reconstruction0}
	\end{equation}
	
	Applying the backward Euler method on the stiff source term, we obtain a simple first-order upwind type relaxation scheme by setting 
	\begin{equation}
		\begin{aligned}
			&F^{n+1/2}_{j+1/2}=F(U^{n}_{j+1/2}),\,\,\,\;\;
			H^{n+1/2}_j=H(U^{n+1}_j),\\
			&U^{n}_{j+1/2}=\frac{1}{2}(U^{n,l}_{j+1/2}+U^{n,r}_{j+1/2})   \label{upwind}
			-\frac{1}{2\sqrt{\mu/\ep+a^2}}M(U^{n,r}_{j+1/2}-U^{n,l}_{j+1/2})
		\end{aligned}
	\end{equation}
	in \eqref{1DGodunov}, where $U^{n,l}_{j+1/2}$ and $U^{n,r}_{j+1/2}$ are the limiting values of $U(x,t_n)$ on both sides of $(x_{j+1/2},t_n)$, and
	\begin{equation}
		M:=\frac{\pt F}{\pt U}=\begin{pmatrix}
			0\,\,\,&1\\
			\frac{\mu}{\ep}+a^2\,\,\,&0\\
		\end{pmatrix} \label{MBurgers}
	\end{equation} 
	is a constant matrix.
\end{example}

\begin{example}[A upwind type relaxation scheme with second-order accuracy in space] \label{example2}
	The relaxation scheme in \cref{example1} can be easily extended to the one with second-order accuracy in space as follows. Assume that the data at time $t = t_n$ are piecewise linear with a slope $(U_x)^n_j$, i.e., we have
	\begin{equation}
		U(x,t_n)=U^n_j+(U_x)^n_{j}(x-x_j),\quad\quad\quad x\in(x_{j-1/2},x_{j+1/2}).  \label{reconstruction1}
	\end{equation}
	For smooth flows, the following approximation can be employed,
	\begin{equation}
		(U_x)^n_j= (U^n_{j+1}-U^n_{j-1})/(2\Delta x). \label{reconstruction2}
	\end{equation} 
	Then, a upwind type relaxation scheme with second-order accuracy in space can be obtained by setting \eqref{upwind} in \eqref{1DGodunov}.
\end{example}

\subsection{Asymptotic preserving property}
In this subsection, we discuss the asymptotic preserving (AP) property of the scheme proposed in \cref{example2}.

To avoid unacceptable computational cost, a practical relaxation scheme should be able to give a solution without resolving the relaxation scales at order of $\epsilon$, which allows $\Delta x\gg \epsilon$ and $\Delta t\gg\epsilon$. 

Whether a numerical scheme is able to capture the relaxation limit is closely connected with its AP property. This concept can be traced back to the pioneering work of Larsen \cite{L1983}. For hyperbolic systems with stiff relaxation source terms \cite{J1999,LM2002}, the standard procedure of asymptotic analysis in the AP framework is to verify that a numerical scheme reduces to a consistent and stable scheme of the leading order limiting equations, where $\Delta x$ and $\Delta t$ are fixed and independent of the relaxation parameter $\epsilon$. 

A discrete numerical method for the relaxation system \eqref{1Dsc2}, denoted by $P^{\epsilon}_h$, gives an approximate solution $U_h=(u_h, v_h)$, depending on the discrete scale $h=(\Delta x,\Delta t)$. It is expected that the asymptotic behavior of $P^{\epsilon}_h$ depends not only on the relaxation scale $\epsilon$, but also on the numerical scale $h=(\Delta x,\Delta t)$. In most previous studies, the time asymptotics \cite{J1999,DP2017} and space asymptotics \cite{LMM1987,JD1996,LM2002} are usually considered independently. Studies on AP properties of fully discrete schemes (such as \cite{CX2015}) are relatively rare. 

Inspired by the unified preserving property assessing asymptotic orders of a kinetic scheme \cite{GLX2023}, we will discuss the AP property of a relaxation scheme in a ``coarse'' regime that $h=O(\epsilon^\alpha)$ for $\alpha\in(0,1)$, which is between the so-called ``thick'' regime $h=O(1)$ and the ``intermediate'' regime $h=O(\epsilon)$ \cite{LMM1987}. We give the following definition.

\begin{definition} \label{defAP}
	Let $P^{\epsilon}_h$ be a consistent numerical scheme of the relaxation system \eqref{1Dsc2}. Let $\Delta x=O(\epsilon^\alpha)$ and $\Delta t=O(\epsilon^\beta)$ with $\alpha_0<\alpha<1$ and $\beta_0<\beta<1$ for $\alpha_0\in [0,1)$ and $\beta_0\in [0,1)$. Then, we say that $P^{\epsilon}_h$ is AP in a coarse regime at the cell resolutiuon $\Delta x=o(\epsilon^{\alpha_0})$ and $\Delta t=o(\epsilon^{\beta_0})$, if $P^{\epsilon}_h\rightarrow P^0_0$ as $\epsilon\rightarrow0$.
	
\end{definition}

The above AP concept provides a picture how a relaxation scheme approaches to the asymptotic limit, as demonstrated in \cref{fig:AP}. In the following we analysis the AP property of the scheme proposed in \cref{example2} by using the so-called asymptotic modified equation (AME) approach \cite{JD1996}. 

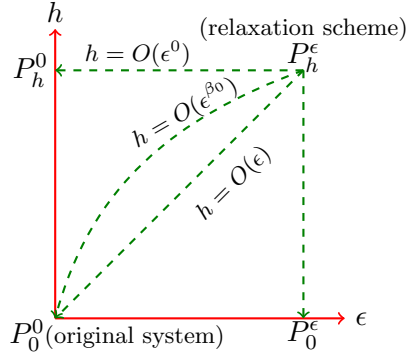
\begin{figure}[htbp]  
	\centering
	\setlength{\abovecaptionskip}{0.05cm}
	\setlength{\belowcaptionskip}{0.0cm}
	\begin{tikzpicture}[scale=1.1]
		\draw [red] [thick][->] (0,0) -- (3.5,0);
		\draw [red] [thick][->] (0,0) -- (0,3.5);
		\draw [green!50!black][thick][dashed][->] (3,3) -- (3,0);
		\draw [green!50!black][thick][dashed][->] (3,3) -- (0,3);
		\draw [green!50!black][thick][dashed][->] (3,3) -- (0,0);
		\draw [green!50!black][thick][dashed][->] (3,3) to [out=200, in=75] (0,0);
		\node at (3.7,0) {$\epsilon$};
		\node at (0,3.7) {$h$};
		\node at (0.75, -0.2) {$P^0_0$\footnotesize \text{(original system)}};
		\node at (-0.3, 3) {$P^0_h$};
		\node at (3, -0.2) {$P^\epsilon_0$};
		\node at (3.0, 3.15) {$P^\epsilon_h$};
		\node at (3.0, 3.5){ \footnotesize \text{(relaxation scheme)}};
		\node at (1.0, 3.2) {\footnotesize $h=O(\epsilon^0)$};
		\node [rotate around={30:(1.5, 1.5)}] at (0.7, 3.0) {\footnotesize $h=O(\epsilon^{\beta_0})$};
		\node [rotate around={45:(1.5, 1.5)}] at (0.8, 2.2) {\footnotesize $h=O(\epsilon)$};			
	\end{tikzpicture}   
	\caption{Schematic diagram of the asymptotic path to the limiting hydrodynamic regimes. The region below the line $h=O(\epsilon)$ suggests resolved relaxation scale. The line $h = O(\epsilon^{\beta_0})$ represents the upper limit for an AP scheme according to \cref{defAP}. The region between the two lines $h=O(\epsilon)$ and $h=O(\epsilon^{\beta_0})$ represents the coarse regime for a relaxation scheme to be AP.} \label{fig:AP}
\end{figure}

In order to obtain the modified equation of the scheme proposed in \cref{example2}, we perform the Taylor expansions on all terms in \eqref{1DGodunov} at $(x,t)=(x_j, t_n)$, leading to the following equation after some standard algebraic manipulations,
\begin{equation}
	\begin{aligned}
		\small
		\pt_t u_h+\frac{\Delta t}{2}\pt^2_t u_h
		=&-(\pt_x v_h-\frac{\Delta x^2}{12}\pt^3_x v_h)
		+\sqrt{\frac{\mu}{\epsilon}+a^2}\frac{\Delta x^3}{24}\pt^4_x u_h \\
		&+O(\Delta x^4,\frac{\Delta x^5}{\epsilon^{1/2}},\Delta t^2)\fL(U_h),\\
		\pt_t v_h+\frac{\Delta t}{2}\pt^2_t v_h
		=&-(\frac{\mu}{\epsilon}+a^2)(\pt_x u_h-\frac{1}{12}\Delta x^2\pt^3_x u_h)
		+\sqrt{\frac{\mu}{\epsilon}+a^2}\frac{\Delta x^3}{24}\pt^4_x v_h	\\
		+O(\frac{\Delta x^4}{\epsilon},&\frac{\Delta x^5}{\epsilon^{1/2}},\Delta t^2)\fL(U_h)+(1+\Delta t\pt_t+O(\Delta t^2)\fL)\left(\frac{f(u_h)-v_h}{\epsilon}\right),
	\end{aligned}   \label{fullmodified}
\end{equation}
where $\fL(\cdot)$ is a general linear operator acting on the dummy variable, representing the collection of temporal and spatial derivatives of different orders. It can take different forms at different places. The high order time derivatives of $u$ and $v$ can be replaced in terms of spatial derivatives successively by using $\eqref{fullmodified}$, as done in the modified equation approach \cite{WH1974}.

It is clear that for fixed $\ep$, as $\Delta t \rightarrow 0$ and $\Delta x \rightarrow 0$, the modified equation \eqref{fullmodified} reduces to the relaxation system \eqref{1Dsc2}, suggesting that the scheme in \cref{example2} is a consistent scheme (second order in space and first order in time) for \eqref{1Dsc2}.

Then, the behavior of this scheme for small $\ep$ while holding $h$ fixed is as follows. Multiplying $\epsilon$ on both sides of $\eqref{fullmodified}_2$, we can rewrite $\eqref{fullmodified}_2$ as 
\begin{equation}
	(1+O(\epsilon,\Delta x,\Delta t)\fL)v_h=f(u_h)-\mu\pt_x u_h
	+O(\epsilon,\Delta x^2,\Delta t)\fL(u_h), 
\end{equation}
which suggests that 
\begin{equation}
	\begin{aligned}
		v_h=f(u_h)-\mu\pt_x u_h+O(\epsilon,\Delta x^2,\Delta t)\fL(u_h).     \label{full3}
	\end{aligned}
\end{equation}
Thus, it follows from \eqref{full3} that
\begin{equation}
	\begin{aligned}
		\pt_x v_h=&\pt_x(f(u_h)-\mu\pt_x u_h)+\pt_x(v_h-f(u_h)+\mu\pt_x u_h)\\
		=&\pt_x(f(u_h)-\mu\pt_x u_h)+O(\epsilon,\Delta x^2,\Delta t). \label{full4}
	\end{aligned}
\end{equation}
Here, and in the sequel, we neglect $\fL(u_h)$ for simplicity. Applying $\pt_t$ and $\pt_x\pt_t$ on both sides of $\eqref{fullmodified}_1$ and $\eqref{fullmodified}_2$, respectively, we get
\begin{equation}
	\begin{aligned}
		\pt_t^2 u_h=&-\pt_x\pt_t v_h+O(\Delta x^2,\Delta x^3\epsilon^{-1/2},\Delta t)\\
		=&-\pt_x\pt_t(v_h-f(u_h)+\mu \pt_x u_h)
		+\pt_x\pt_t(\mu \pt_x u_h-f(u_h))\\
		&+O(\ep,\Delta x^2,\Delta x^3\epsilon^{-1/2},\Delta t)\\
		=&-\pt_x\pt_tf(u_h)+\mu\pt^2_x\pt_t u_h+O(\ep,\Delta x^2,\Delta x^3\epsilon^{-1/2},\Delta t),\label{full5}
	\end{aligned}
\end{equation}
where we have used \eqref{full3} to derive the last line of \eqref{full5}. Substituting \eqref{full4} and \eqref{full5} into $\eqref{fullmodified}_1$, we can obtain
\begin{equation}
	\small
	\begin{aligned}
		\pt_t u_h+\pt_x f(u_h)-\mu\pt^2_x u_h
		=\sqrt{\frac{\mu}{\epsilon}+a^2}\frac{\Delta x^3}{24}\pt^4_x u_h
		+O(\epsilon,\Delta x^2,\Delta t,\frac{\Delta x^5}{\epsilon^{1/2}},\frac{\Delta x^3\Delta t}{\epsilon^{1/2}}), \label{full6}
	\end{aligned}
\end{equation}
which leads to the following result.
\begin{proposition}  \label{APfull}
	For $\epsilon\ll1$, suppose that 
	\begin{align}
		\Delta x=O(\epsilon^\alpha),\quad
		\Delta t=\frac{CFL\Delta x}{\sqrt{\mu/\epsilon+a^2}}=O(\epsilon^{\beta}), \quad
		CFL\in (0,1),    \label{fullCFL}
	\end{align}
	with $\frac{1}{6}<\alpha<\frac{1}{2}$ and $\frac{2}{3}<\beta=\alpha+\frac{1}{2}<1$. Then, the scheme proposed in \cref{example2} is AP in a coarse regime at the cell resolutiuon $\Delta x=o(\epsilon^{1/3})$ and $\Delta t=o(\epsilon^{2/3})$. 
\end{proposition}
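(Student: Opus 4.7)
The plan is to read off the asymptotic limit directly from the modified equation \eqref{full6}, which was derived just before the proposition and already does the hard work of eliminating $v_h$ and the time derivatives in favor of $u_h$ and its spatial derivatives. With that equation in hand, the proof reduces to a bookkeeping exercise: substitute the prescribed scaling $\Delta x=O(\epsilon^\alpha)$, $\Delta t=O(\epsilon^\beta)$ into every term on the right-hand side, check that each one tends to zero as $\epsilon\to 0$, and conclude that in the limit $u_h$ satisfies $\pt_t u_h+\pt_x f(u_h)=\mu\pt_x^2 u_h$, which is exactly the leading-order equation produced by \cref{Leading} specialised to the scalar setting.

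First I would go term by term in the right-hand side of \eqref{full6}. The leading dispersive correction $\sqrt{\mu/\epsilon+a^2}\,\Delta x^3/24$ scales as $\epsilon^{3\alpha-1/2}$, so it vanishes precisely when $\alpha>1/6$. The $O(\epsilon)$ term is trivially small; the $O(\Delta x^2)=O(\epsilon^{2\alpha})$ and $O(\Delta t)=O(\epsilon^{\beta})$ terms vanish for any $\alpha,\beta>0$; the $O(\Delta x^5/\epsilon^{1/2})=O(\epsilon^{5\alpha-1/2})$ error needs only $\alpha>1/10$; and the mixed error $O(\Delta x^3\Delta t/\epsilon^{1/2})$ scales as $\epsilon^{3\alpha+\beta-1/2}$, which is even smaller once the CFL relation $\beta=\alpha+1/2$ from \eqref{fullCFL} is imposed. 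The binding lower bound is therefore $\alpha>1/6$. At the same time, to keep $\beta<1$ as required by \cref{defAP}, the CFL identity $\beta=\alpha+1/2$ forces $\alpha<1/2$. This pins down the window $\tfrac{1}{6}<\alpha<\tfrac{1}{2}$, $\tfrac{2}{3}<\beta<1$ stated in the proposition, and it identifies the coarse resolution threshold as $\Delta x=o(\epsilon^{1/3})$, $\Delta t=o(\epsilon^{2/3})$.

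The main technical subtlety — the only thing beyond this bookkeeping — is ensuring that the symbol $\fL$ representing collected spatial/temporal derivatives really does act on sufficiently smooth data, so that each $O(\cdot)\fL(u_h)$ is a genuine quantitative error rather than a formal one. I would handle this by appealing to the smoothness assumptions standing behind the Chapman--Enskog expansion already used to derive \eqref{fullmodified}: a finite number of derivatives of $u_h$ are uniformly bounded on the time interval of interest, so the above formal orders become genuine $\epsilon$-bounds. Stability enters through the CFL condition \eqref{fullCFL} built into the scheme of \cref{example2}, which ensures the discrete evolution does not blow up uniformly in $\epsilon$. Combining this stability with the consistency of \eqref{fullmodified} for the relaxation system \eqref{1Dsc2} (already noted after \eqref{fullmodified}) and the $\epsilon\to 0$ reduction to $\pt_t u+\pt_x f(u)=\mu\pt_x^2 u$ derived above, we conclude $P^{\epsilon}_h\to P^{0}_0$ in the sense of \cref{defAP}, giving the claimed AP property.
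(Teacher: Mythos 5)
Your proposal is correct and follows essentially the same route as the paper: substitute the scalings $\Delta x=O(\epsilon^\alpha)$, $\Delta t=O(\epsilon^{\alpha+1/2})$ into the modified equation \eqref{full6}, identify $\epsilon^{3\alpha-1/2}$ from the dispersive term as the binding constraint giving $\alpha>1/6$, and conclude that the leading-order limit is the original viscous conservation law \eqref{1Dsc}. The paper phrases the final step via a Chapman--Enskog expansion of $u_h$ rather than reading the limit off directly, but this is a presentational difference only.
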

\begin{proof}
	With the condition \eqref{fullCFL}, the modified equation \eqref{full6} is rewritten as 
	\begin{equation}
		\pt_t u_h+\pt_x f(u_h)-\mu\pt^2_x u_h
		=C\epsilon^{3\alpha-\frac{1}{2}}\pt^4_x u_h
		+O(\epsilon,\epsilon^{2\alpha},\epsilon^{\alpha+\frac{1}{2}},\epsilon^{5\alpha-\frac{1}{2}},\epsilon^{4\alpha}), 
	\end{equation}
	where $C=O(1)$ is a constant. Note that $3\alpha-\frac{1}{2}>0$, implying that $\epsilon^{3\alpha-\frac{1}{2}}=o(1)$. Then, we apply the Chapman-Enskog expansion to $u_h$,
	\begin{equation}
		u_h=u^{(0)}_h+\epsilon u^{(1)}_h+O(\epsilon^2).
	\end{equation}
	It can be checked straightforwardly that the $u^{(0)}_h$ satisfies \eqref{1Dsc}. This implies $P^\ep_h\rightarrow P^0_0$ as $\ep\rightarrow0$. The proof is completed.
\end{proof}	

\begin{remark}
	It can be observed that for the inviscid case where $\mu=0$, the relaxation scheme in \cref{example2} is an AP scheme in the ``thick'' regime $h=O(1)$ and the governing equation of leading order terms is exactly the inviscid Burgers equation. In addition, it can be seen in a similar manner that the scheme proposed in \cref{example1} is not AP, suggesting that second-order accuracy in space is necessary for such upwind type relaxation scheme to be AP in a coarse regime.
\end{remark}


\subsection{Dissipation property}
In this subsection, we derive the dissipation property of the upwind type relaxation scheme proposed in \cref{example1} by using the AME approach as in the previous subsection.

For fixed $\ep$, based on the method of Taylor expansion, the modified equation of the scheme in \cref{example1} is given by
\begin{equation}
	\begin{aligned}
		\pt_t u_h+\frac{\Delta t}{2}\pt^2_t u_h=&-\pt_x v_h +\sqrt{\frac{\mu}{\epsilon}+a^2}\frac{\Delta x}{2}\pt^2_x u_h
		+O(\Delta x^2,\Delta t^2)\fL(U_h),\\
		\pt_t v_h+\frac{\Delta t}{2}\pt^2_t v_h
		=&-(\frac{\mu}{\epsilon}+a^2)\pt_x u_h
		+\sqrt{\frac{\mu}{\epsilon}+a^2}\frac{\Delta x}{2}\pt^2_x v_h+O(\Delta x^2, \Delta t^2)\fL(U_h)
		\\
		&+(1+\Delta t\pt_t+O(\Delta t^2)\fL)\left(\frac{f(u_h)-v_h}{\epsilon}\right).
	\end{aligned}   \label{exmo}
\end{equation}
Then, similarly, by using the Chapman-Enskog expansion for small $\ep$ (while holding $h$ fixed), we obtain from \eqref{exmo} that
\begin{equation} 
	\begin{aligned}
		\pt_t u_h +\pt_x f(u_h)=&\pt_x\big(\big(\mu+\epsilon(a^2-f^\prime(u_h)^2)
		+\sqrt{\frac{\mu}{\epsilon}+a^2}\frac{\Delta x}{2}-f^{\prime}(u_h)^2\frac{\Delta t}{2}\big)\pt_x u_h \big)\\
		&-\mu(\epsilon+\frac{3}{2}\Delta t)\pt_x^2 \pt_t u_h
		+O(\epsilon^{\frac{1}{2}}\Delta x,\epsilon\Delta t,\Delta x^2,\Delta t^2,\ep^2). \label{exmo1}
	\end{aligned}
\end{equation}
This leads to the following proposition.
\begin{proposition}
	Suppose that the condition \eqref{1Dsc3} holds and suppose that
	\begin{equation} \label{CFLcon}
		\Delta t= \frac{CFL \Delta x}{\sqrt{\frac{\mu}{\epsilon}+a^2}},
		\quad CFL\in(0,1).
	\end{equation}
	Then, the relaxation scheme proposed in \cref{example1} has a dissipation property in the sense that the governing equation of up to first order terms in the modified equation \eqref{exmo1} is dissipative. 
\end{proposition}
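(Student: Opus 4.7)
The plan is to substitute the CFL condition \eqref{CFLcon} into the coefficient of $\pt_x u_h$ on the right-hand side of the modified equation \eqref{exmo1}, show that the resulting effective diffusion coefficient is non-negative by invoking the subcharacteristic-type condition \eqref{1Dsc3}, and then convert the mixed-derivative term $-\mu(\epsilon+\tfrac{3}{2}\Delta t)\pt_x^2\pt_t u_h$ into a purely spatial contribution via the leading-order relation, so that the dissipative structure of the first-order truncation is exposed.

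First I would substitute $\Delta t = CFL\,\Delta x/\sqrt{\mu/\epsilon+a^2}$ into the diffusion coefficient in \eqref{exmo1} and regroup it as
\begin{equation*}
D_{\mathrm{eff}}(u_h) = \bigl[\mu + \epsilon\bigl(a^2 - f'(u_h)^2\bigr)\bigr] + \frac{\Delta x}{2\sqrt{\mu/\epsilon+a^2}}\bigl[(\mu/\epsilon + a^2) - CFL\cdot f'(u_h)^2\bigr].
\end{equation*}
Since \cref{example1} corresponds to $\phi\equiv1$, the condition \eqref{1Dsc3} reads $f'(u)^2 \le \mu/\epsilon+a^2$ for every $u\in\mathcal{D}$. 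The first bracket is therefore at least $\mu - \epsilon\cdot(\mu/\epsilon) = 0$, while the second bracket is at least $(1-CFL)(\mu/\epsilon+a^2) > 0$ because $CFL\in(0,1)$. Hence $D_{\mathrm{eff}}(u_h)\ge 0$.

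Next I would eliminate the mixed term $-\mu(\epsilon+\tfrac{3}{2}\Delta t)\pt_x^2\pt_t u_h$ by the standard modified-equation device: invoking the leading-order relation $\pt_t u_h = -\pt_x f(u_h) + O(\mu,\epsilon,\Delta x,\Delta t)$, it becomes $\mu(\epsilon+\tfrac{3}{2}\Delta t)\pt_x^3 f(u_h)$ up to remainders that are absorbed into the residual $O(\epsilon^{1/2}\Delta x,\epsilon\Delta t,\Delta x^2,\Delta t^2,\epsilon^2)$ already present in \eqref{exmo1}. The first-order truncation of \eqref{exmo1} then takes the form
\begin{equation*}
\pt_t u + \pt_x f(u) = \pt_x\bigl(D_{\mathrm{eff}}(u)\,\pt_x u\bigr) + \mu\bigl(\epsilon+\tfrac{3}{2}\Delta t\bigr)\pt_x^3 f(u),
\end{equation*}
whose second-order spatial coefficient is non-negative. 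Testing against any strictly convex entropy $\eta$, the leading-order entropy production is $-\int \eta''(u)D_{\mathrm{eff}}(u)(\pt_x u)^2\,dx\le 0$, yielding the claimed dissipation property.

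I expect the main subtlety to lie in the first bracket of $D_{\mathrm{eff}}$: the potentially negative contribution $-\epsilon f'(u_h)^2$ must be absorbed by the physical viscosity $\mu$, and this absorption is exactly what \eqref{1Dsc3} encodes. Combined with the strict inequality $CFL<1$, these two ingredients make $D_{\mathrm{eff}}$ genuinely non-negative; dropping either would allow the effective diffusion to degenerate or to change sign.
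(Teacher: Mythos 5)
Your proposal is correct and follows essentially the same route as the paper: substitute the CFL relation into the diffusion coefficient of \eqref{exmo1}, regroup it into the two brackets $\mu+\epsilon\big(a^2-f^\prime(u_h)^2\big)$ and $\big(\tfrac{\mu}{\epsilon}+a^2\big)^{-1/2}\big(\tfrac{\mu}{\epsilon}+a^2-CFL\cdot f^\prime(u_h)^2\big)\tfrac{\Delta x}{2}$, and conclude non-negativity from \eqref{1Dsc3} together with $CFL<1$. The only difference is that you additionally convert the mixed term $-\mu(\epsilon+\tfrac{3}{2}\Delta t)\partial_x^2\partial_t u_h$ into a dispersive third-order contribution and write out the entropy production explicitly, whereas the paper's proof reduces the claim directly to the sign of the diffusion coefficient; this is a harmless elaboration of the same argument rather than a different method.
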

\begin{proof}
	Recalling \eqref{exmo1}, it suffices to show that
	\begin{equation}
		\begin{aligned}
			&\mu+\epsilon(a^2-f^\prime(u_h)^2)
			+\sqrt{\frac{\mu}{\epsilon}+a^2}\frac{\Delta x}{2}-f^{\prime}(u_h)^2\frac{\Delta t}{2}\\
			=&\mu+\epsilon(a^2-f^\prime(u_h)^2)
			+\left(\frac{\mu}{\epsilon}+a^2\right)^{-1/2}
			\left(\frac{\mu}{\epsilon}+a^2-CFL \cdot f^{\prime}(u_h)^2\right)\frac{\Delta x}{2}	\geq 0,
		\end{aligned}
	\end{equation}
	which is ensured by the condition \eqref{1Dsc3} and \eqref{CFLcon}. The proof is completed.
\end{proof}

\subsection{An implicit-explicit generalized Riemann problem method}
In this subsection, based on the acoustic GRP method \cite{BF2003,BLW2006,BL2007}, we extend the scheme proposed in \cref{example2} to a LW type one with second-order accuracy in both space and time. Such relaxation scheme enjoys the same advantages as LW type methods, such as the temporal-spatial coupling nature and relatively compact stencils. To overcome the stiffness of the source term, a novel IMEX-GRP method is developed as follows.

With piecewise linear data at $t = t_n$ as in \eqref{reconstruction1}, recalling \eqref{1DGodunov} and setting
\begin{equation} \label{exampleB0}
	\begin{aligned}
		&F^{n+1/2}_{j+1/2}=F(U^{n+1/2}_{j+1/2}),\quad
		H^{n+1/2}_j=\frac{1}{2}\big(H(U^n_j)+H(U^{n+1}_j)\big),\\
		&U^{n+1/2}_{j+1/2}=U^{n}_{j+1/2}+\frac{\Delta t}{2}(U_t)^{n+1/4}_{j+1/2},
	\end{aligned}
\end{equation}
we get a second-order LW type relaxation scheme for \eqref{1Dsc}. The Riemann solution $U^{n}_{j+1/2}$ is determined by the upwind scheme,
\begin{equation}
	U^{n}_{j+1/2}=\frac{1}{2}(U^{n,l}_{j+1/2}+U^{n,r}_{j+1/2})
	-\frac{1}{2\sqrt{\mu/\ep+a^2}}M(U^{n,r}_{j+1/2}-U^{n,l}_{j+1/2}),
\end{equation}	
where $U^{n,l}_{j+1/2}$ and $U^{n,r}_{j+1/2}$ stand for the limiting values of $U(x,t_n)$ on both sides of $(x_{j+1/2},t_n)$, and $M=\frac{\pt F}{\pt U}$ is a constant matrix given in \eqref{MBurgers}.

The term $(U_t)^{n+1/4}_{j+1/2}$ stands for the value of $U_t(x,t)$ at the cell interface $x=x_{j+1/2}$ averaged over the time interval $[t_n,t_{n+1/2}]$, and it is evaluated by solving the GRP at each singularity point $(x_{j+1/2},t_n)$,
\begin{equation}
	\left.\begin{cases}
		\begin{aligned}
			&U_t+M U_x=H(U), \\
			&U(x,t_n)=\left.\begin{cases}
				U^{n,l}(x),\,\,\,\,\,\quad x<x_{j+1/2},\\
				U^{n,r}(x),\,\,\,\,\,\quad x>x_{j+1/2},
			\end{cases}	
			\right.
		\end{aligned}
	\end{cases}\right.   \label{GRPB}
\end{equation}    
where $U^{n,l}(x)$ and $U^{n,r}(x)$ are two linear polynomials defined on the neighboring computational cells, respectively. In the spirit of the acoustic GRP methods \cite{BF2003,BLW2006,BL2007}, which is also called arbitrary derivative (ADER) method in \cite{TT2006}, $(U_t)^{n+1/4}_{j+1/2}$ is computed with the formulae
\begin{equation}
	\begin{aligned}
		(U_t)^{n+1/4}_{j+1/2}=-(F(U)_x)^n_{j+1/2}+H(U^{n+1/2}_{j+1/2}),
	\end{aligned}\label{convective}
\end{equation}
where
\begin{equation}
	\begin{aligned}
		&-(F(U)_x)^n_{j+1/2}
		=-R\Lambda^+R^{-1}(U_x)^n_j-R\Lambda^-R^{-1}(U_x)^n_{j+1}. 
	\end{aligned}
\end{equation}
The matrices $R$ and $\Lambda$ comes from decomposition of $M=\frac{\pt F}{\pt U}$, where $R$ is the right eigenmatrix of $M$, $\Lambda =\diag(\lambda_1,\lambda_2)$, $\lambda_1\, (i= 1, 2)$, are the eigenvalues of $M$, $\Lambda^+=\diag(\max(\lambda_i,0))$ and $\Lambda^-=\diag(\min(\lambda_i,0))$. 

Finally, we update the slope as follows,
\begin{equation}\label{exampleBend}
	\begin{aligned}
		&(U_x)^{n+1}_j=\frac{1}{\Delta x}\left(U^{n+1,-}_{j+1/2}-U^{n+1,-}_{j-1/2}\right),\,\,\,\,\,
		U^{n+1,-}_{j+1/2}=U^{n}_{j+1/2}+\Delta t(U_t)^{n+1/2}_{j+1/2},\\
		&(U_t)^{n+1/2}_{j+1/2}
		=-R\Lambda^+R^{-1}(U_x)^n_j-R\Lambda^-R^{-1}(U_x)^n_{j+1}+H(U^{n+1,-}_{j+1/2}).
	\end{aligned}
\end{equation}

\begin{remark}
	The implicity of this IMEX-GRP method lies in the process of computing the mid-point value $U^{n+1/2}_{j+1/2}$. Recalling \eqref{1Dsc2}, we emphasize that there is no source term in the governing equation of $u$, so $u^{n+1/2}_{j+1/2}$ can be obtained explicitly. In addition, the stiff source term $\frac{f(u)-v}{\epsilon}$ is linear in $v$, such that the computing of $v^{n+1/2}_{j+1/2}$ is implicit but linear, leading to a simple and explicit implementation.
\end{remark}

\section{The relaxation schemes for the Navier-Stokes equations} \label{secNS}
In this section, with the compressible Navier-Stokes equations as a prominent example, we extend the flux relaxation approach to multi-dimensional entropy dissipative systems of viscous conservation laws. As an example, a second-order LW type relaxation scheme in the FV framework will be proposed, where the numerical fluxes will be constructed by the IMEX-GRP method developed in \eqref{exampleB0}--\eqref{exampleBend}.
\subsection{The relaxation schemes for the 1-D Navier-Stokes equations}
\subsubsection{The 1-D Navier-Stokes equations}
We consider the 1-D compressible Navier-Stokes equations for viscous and heat conducting ideal polytropic gases, 
\begin{equation}
	\left.\begin{cases}
		\rho_t+(\rho u)_x=0,\\
		(\rho u)_t+(\rho u^2+p)_x=((\frac{4}{3}\mu+\mu^{\prime}) u_{x})_x,\\
		(\rho E)_t+(u(\rho E+p))_x=(\mu u u_x+\kappa T_{x})_x.
	\end{cases}\label{CNS}
	\right.
\end{equation}
Here, $\rho$, $u$, $p$, $E$ and $T$ are the density, velocity, pressure, specific total energy, and temperature, respectively. The pressure $p$ is computed from the equation of state, i.e. $p=(\gamma-1)\rho(E-\frac{1}{2}u^2)$, where $\gamma>1$ is the ratio of specific heats. The temperature $T$ is given by $T=\frac{p}{\rho}$. $\mu$ is the dynamic viscosity, $\mu^\prime$ is the bulk viscosity and $\kappa$ is the heat conductivity. Using the Stokes hypothesis, $\mu^\prime=0$. The heat conductivity coefficient $\kappa$ is calculated through the viscosity coefficient $\mu$ by utilizing the Prandtl number $Pr$, i.e. $\kappa =\frac{\gamma\mu}{(\gamma-1)Pr}$. For the viscosity coefficient, $\mu$ can take any reasonable form. In the present paper, we consider only the case that $\mu$ is a constant.

Denote the conservative and primitive variables by $w=(\rho,m,\fE)^\top$ and $\tw=(\rho, u, T)^\top$, respectively, where $m =\rho u$ and $\fE = \rho E$. Then, \eqref{CNS} can be rewritten as 
\begin{equation}
	\begin{aligned}
		&w_t + f(w)_x = (B(w)w_x)_x, \\
		f(w)=\begin{pmatrix}
			m\\
			um+p\\
			u(\fE+p)\\
		\end{pmatrix},\quad
		&B(w)=\widetilde{B}(\tw(w))\frac{\pt \widetilde{w}(w)}{\pt w},\quad
		\widetilde{B}(\tw)=\begin{pmatrix}
			0&0&0\\
			0&\frac{4}{3}\mu&0\\
			0&\frac{4}{3}\mu u&\kappa
		\end{pmatrix}.
	\end{aligned}\label{CNS1}
\end{equation}

\subsubsection{Entropy dissipative property}
The system \eqref{CNS1} is equipped with an entropy-entropy flux pair $(\eta(w),g(w))$. This pair takes the form
\begin{equation}
	(\eta(w), g(w))=(-\rho S,\,-\rho u S),  \label{entropy}
\end{equation}
where $\eta(w)$ is the mathematical entropy and $S=\frac{1}{\gamma-1}\ln(T\rho^{1-\gamma})$ is the physical entropy. It can be shown by a direct calculation that 
\begin{equation}
	\eta_{ww}(w)=(\frac{\pt \widetilde{w}(w)}{\pt w})^\top Q(w)\frac{\pt \widetilde{w}(w)}{\pt w},\,\,\,
	\eta_{ww}(w)B(w)=(\frac{\pt \widetilde{w}(w)}{\pt w})^\top\widehat B(w)\frac{\pt \widetilde{w}(w)}{\pt w},
\end{equation}
where 
\begin{equation}
	Q(w)=\diag\{\frac{1}{\rho},\frac{\rho}{T},\frac{\rho}{(\gamma-1)T^2}\}>0,\,\,\,\,\,
	\widehat B(w)=\diag\{ 0,\frac{4\mu}{3T},\frac{\kappa}{T^2}\}\geq 0.		
\end{equation}
This implies 
\begin{equation}
	\eta_{ww}(w)>0,\,\,\,(\eta_{ww}(w)B(w))^\top=\eta_{ww}(w)B(w)\geq 0.
\end{equation}
Therefore, the physically admissible solution of \eqref{CNS1} satisfies the entropy inequality
\begin{equation}
	\eta(w)_t+g(w)_x-(\eta_w(w)B(w)w_x)_x=-\frac{4\mu u_x^2}{3T}-\frac{\kappa T_x^2}{ T^2}\leq 0.
\end{equation}

\subsubsection{The relaxation scheme} \label{example4}
By setting $C=I_3$ and $D=a^2 I_3$ in \eqref{sys1}, we can write the relaxation system of the 1-D Navier-Stokes equation \eqref{CNS1} in the form
\begin{equation}
	\begin{aligned}		
		&U_t+M(U)U_x = H(U),\\
		&U=\begin{pmatrix}
			w\\
			v
		\end{pmatrix},\,\,\,
		M(U)=
		\begin{pmatrix}
			0			& I_3  \\
			\frac{B(w)}{\epsilon}+a^2I_3\,\,	& 0 
		\end{pmatrix},\,\,\,
		H(U)=\begin{pmatrix}
			0\\
			\frac{f(w)-v}{\epsilon}
		\end{pmatrix}. 	\label{CNS2} 
	\end{aligned}
\end{equation}
Here, $I_3$ denotes the $3\times 3$ identity matrix and $v=(v_1,v_2,v_3)^\top$. The small positive parameter $\epsilon$ stands for the relaxation time and $a$ is a positive constant satisfying
\begin{equation}
	\eta_{ww}(w)B(w)+\epsilon(a^2\eta_{ww}(w)-\eta_{ww}(w)A(w)^2)>0,
\end{equation}
where $A(w)=\frac{\pt f(w)}{\pt w}$ and $\eta(w)$ is the entropy function given in \eqref{entropy}.

The relaxation system \eqref{CNS2} can be rewritten as
\begin{equation}
	\begin{aligned}
		&U_t+F(U)_x = H(U),\,\,
	\end{aligned}\label{CNS3}
\end{equation}
where
\begin{equation}
	\begin{aligned}
		&F(U)=(v_1, v_2, v_3, a^2\rho, \frac{4\mu}{3\epsilon} u+a^2m,\frac{\kappa}{\epsilon} T+\frac{2\mu }{3\epsilon}u^2+a^2\fE)^\top.  
	\end{aligned}
\end{equation}
Relaxation schemes for \eqref{CNS3} can be developed in the FV framework as in the previous section, and a second-order LW type relaxation scheme is presented as follows.

With the same notations as in Section 3, suppose that the data at time $t=t_n$ are piecewise linear with a slope $(U_x)^n_j$, i.e., we have
\begin{equation}
	U(x,t_n)=U^n_j+(U_x)^n_{j}(x-x_j), \;\;\; x\in(x_{j-1/2},x_{j+1/2}). \label{reconstruction}
\end{equation}
Then, a second-order LW type scheme for \eqref{CNS3} can be written as
\begin{equation}
	\begin{aligned}
		&U^{n+1}_j=U^{n}_j-\frac{\Delta t}{\Delta x}
		\left(F^{n+1/2}_{j+1/2}-F^{n+1/2}_{j-1/2}\right)
		+\frac{\Delta t}{2}\left(H^n_j+H^{n+1}_j\right),  \\
		&F^{n+1/2}_{j+1/2}=F\left(U^{n+1/2}_{j+1/2}\right),\,\,\,\,\,
		H^n_j=H\left(U^n_j\right),  \\
		&U^{n+1/2}_{j+1/2}=U^{n}_{j+1/2}+\frac{\Delta t}{2}(U_t)^{n+1/4}_{j+1/2}. 
	\end{aligned}\label{1Dflux}
\end{equation}
The Riemann solution $(U)^{n}_{j+1/2}$ is determined by solving the Riemann problem (RP),
\begin{equation}
	\begin{cases}
		\begin{aligned}
			&U_t+F(U)_x = 0, \\
			&U(x,t_n)=\left.\begin{cases}
				U^{n,l}(x_{j+1/2}-0),\,\,\,\,\,\quad x<x_{j+1/2},\\
				U^{n,r}(x_{j+1/2}+0),\,\,\,\,\,\quad x>x_{j+1/2},   
			\end{cases}	
			\right.
		\end{aligned}
	\end{cases}   \label{1DRP}
\end{equation}
and the time derivative $(U_t)^{n+1/4}_{j+1/2}$ is obtained by solving the GRP at $(x_{j+1/2},t_n)$,
\begin{equation}
	\left.\begin{cases}
		\begin{aligned}
			&U_t+F(U)_x=H(U), \\
			&U(x,t_n)=\left.\begin{cases}
				U^{n,l}(x),\,\,\,\,\,\quad  x<x_{j+1/2},\\
				U^{n,r}(x),\,\,\,\,\,\quad  x>x_{j+1/2},
			\end{cases}	
			\right.
		\end{aligned}
	\end{cases}\right.   \label{1DGRP}
\end{equation}    
Here, $U^{n,l}(x)$ and $U^{n,r}(x)$ are two linear polynomials defined on the neighboring computational cells, respectively. The RP \eqref{1DRP} can be solved by approximate Riemann solvers, such as a Roe-type solver \cite{Roe1981} or HLLC-type solver \cite{BCLC1997}. 

Next, in analogy with the acoustic GRP solver \cite{BF2003,BLW2006,BL2007}, we linearize the convective terms in \eqref{1DGRP} around the approximate local Riemann solution $U^{n}_{j+1/2}$ to derive that
\begin{equation}
	\left.\begin{cases}
		\begin{aligned}
			&\pt_t U+ M\pt_x U=H(U), \\
			&w(x,t_n)=\left.\begin{cases}
				U^{n,l}(x),\,\,\,\,\,\quad x<x_{j+1/2},\\
				U^{n,r}(x),\,\,\,\,\,\quad x>x_{j+1/2},
			\end{cases}	
			\right.
		\end{aligned}
	\end{cases}\right.
\end{equation}
where $M:=\frac{\pt F}{\pt U}(U^{n}_{j+1/2})$. Then, the IMEX-GRP solver takes the form
\begin{equation}
	(U_t)^{n+1/4}_{j+1/2}
	=-R\Lambda^+R^{-1}(U_x)^n_j-R\Lambda^-R^{-1}(U_x)^n_{j+1}+H(U^{n+1/2}_{j+1/2}), \label{1DNSIMEXGRP}
\end{equation}
where $\Lambda =\diag(\lambda_1,\dots,\lambda_6)$, $\lambda_i$, $i= 1,\dots, 6$, are the eigenvalues of $M$, $R$ is the right eigenmatrix of $M$, $\Lambda^+=\diag(\max(\lambda_i,0))$ and $\Lambda^-=\diag(\min(\lambda_i,0))$.

Finally, we update the slope as follows,
\begin{equation}
	\begin{aligned}
		&(U_t)^{n+1/2}_{j+1/2}
		=-R\Lambda^+R^{-1}(U_x)^n_j-R\Lambda^-R^{-1}(U_x)^n_{j+1}+H(U^{n+1,-}_{j+1/2}),
		\\
		&U^{n+1,-}_{j+1/2}=U^{n}_{j+1/2}+\Delta t(U_t)^{n+1/2}_{j+1/2},\,\,\,\,\,\,\,
		(U_x)^{n+1}_j=\frac{1}{\Delta x}\left(U^{n+1,-}_{j+1/2}-U^{n+1,-}_{j-1/2}\right).
		\label{1Dslope}
	\end{aligned}
\end{equation}

\subsection{The relaxation schemes for the 2-D Navier-Stokes equations}
\subsubsection{The 2-D Navier-Stokes equations}
We consider the 2-D compressible Navier-Stokes equations for viscous and heat conducting ideal polytropic gases,
\begin{equation}
	\left.\begin{cases}
		\pt_t\rho+\nabla\cdot (\rho\bfu)=0,\\
		\pt_t(\rho\bfu)+ \nabla\cdot (\rho\bfu\otimes \bfu+p\bI_2) =\nabla\cdot \mathbf{\tau},\\
		\pt_t(\rho E)+\nabla\cdot\big((\rho E+p)\bfu\big)= \nabla\cdot\bfq+\nabla\cdot(\mathbf{\tau}\bfu).
	\end{cases}\label{2DCNS}
	\right.
\end{equation}
Here, in addition to the thermodynamical flow variables $\rho$, $p$ and $T=\frac{p}{\rho}$, $\bfu=(u_X,u_Y)^\top$ is the velocity, $E=(\frac{p}{\gamma-1}+\frac{1}{2}(u^2_X+u^2_Y))$ is the specific total energy. The viscous stress tensor $\mathbf{\tau}$ and the heat flux vector $\bfq$ are given by
\begin{equation}
	\mathbf{\tau}=\mu\big(\nabla \bfu+ (\nabla \bfu)^\top-\frac{2}{3}(\nabla\cdot\bfu) \bI_2\big),\,\,\,\,\,\,\bfq=\kappa\nabla T,
\end{equation}
where $\mu$ and $\kappa$ are the same as in the 1-D case.

Denote the conservation and hydrodynamic variables by $\bw=(\rho,\bfm,\fE)^\top$ and $\tbw=\tbw(\bw)=(\rho,\bfu, T)^\top$, respectively, where $\bfm=(m_X,m_Y)^\top=(\rho u_X,\rho u_Y)^\top$ and $\fE=\rho E$. Let $\bff(\bw):=(\bff_X(\bw),\bff_Y(\bw))$ and $\tbB(\tbw):=(\tB_{\alpha\beta}(\tbw))_{2\times2}$ with $\alpha, \beta=X, Y$, where $\bff_X(\bw)=(m_X,m_Xu_X+p,m_Yu_X,(\fE+p)u_X)^\top$, $\bff_Y(\bw)=(m_Y,m_Xu_Y,m_Yu_Y+p,(\fE+p)u_Y)^\top$,
and
\begin{equation}
	\tB_{XX}(\tbw)=\begin{pmatrix}
		0\,&0\,&0\,&0\,\\
		0\,&\frac{4}{3}\mu\,&0\,&0\,\\
		0\,&0\,&\mu\,&0\,\\
		0\,&\frac{4}{3}\mu u_X\,&\mu u_Y\,&\kappa\,\\
	\end{pmatrix},
	\,\,
	\tB_{XY}(\tbw)=\begin{pmatrix}
		0\,&0\,&0\,&0\,\\
		0\,&0\,&-\frac{2}{3}\mu\,&0\,\\
		0\,&\mu\,&0\,&0\,\\
		0\,&\mu u_Y\,&-\frac{2}{3}\mu u_X\,&0\,\\
	\end{pmatrix},
\end{equation}
\begin{equation}
	\tB_{YX}(\tbw)=\begin{pmatrix}
		0\,&0\,&0\,&0\,\\
		0\,&0\,&\mu\,&0\,\\
		0\,&-\frac{2}{3}\mu\,&0\,&0\,\\
		0\,&-\frac{2}{3}\mu u_Y\,&\mu u_X\,&0\,\\
	\end{pmatrix},
	\,\,
	\tB_{YY}(\tbw)=\begin{pmatrix}
		0\,&0\,&0\,&0\,\\
		0\,&\mu\,&0\,&0\,\\
		0\,&0\,&\frac{4}{3}\mu\,&0\,\\
		0\,&\mu u_X\,&\frac{4}{3}\mu u_Y\,&\kappa\,\\
	\end{pmatrix}.\\
\end{equation}
Then, \eqref{2DCNS} can be rewritten as 
\begin{equation}
	\pt_t\bw+\nabla\cdot \bff(\bw)=\nabla\cdot(\bB(\bw)\nabla \bw), \label{2DCNS2}
\end{equation}
where $\bB(\bw)=(B_{\alpha\beta}(\bw))_{2\times 2}$ with $B_{\alpha\beta}(\bw)=\tB_{\alpha\beta}(\tbw(\bw))\frac{\pt \tbw(\bw)}{\pt\bw}$. 

Here we remark that the entropy dissipation property of \eqref{2DCNS2} can be derived in a similar manner as the 1-D case, and the details are omitted.

\subsubsection{The relaxation scheme} \label{example5}
Similarly, with artificial variables $\bfv_X, \bfv_Y\in \R^4$, the relaxation system of \eqref{2DCNS2} takes the form
\begin{equation}
	\left.\begin{cases}
		\pt_t\bw+\pt_x\bfv_X + \pt_y \bfv_Y =0,\\
		\pt_t\bfv_X+\frac{1}{\epsilon}\tB_{XX} \pt_x \widetilde{\bw}+a^2\pt_x \bw
		+\frac{1}{\epsilon}\tB_{XY} \pt_y \widetilde{\bw}
		= \frac{1}{\epsilon}(\bff_X-\bfv_X),\\
		\pt_t\bfv_Y+\frac{1}{\epsilon}\tB_{YX} \pt_x \widetilde{\bw} 
		+\frac{1}{\epsilon}\tB_{YY} \pt_y \widetilde{\bw}+a^2\pt_y \bw
		= \frac{1}{\epsilon}(\bff_Y-\bfv_Y).
	\end{cases}	\label{2DCNS3}
	\right.
\end{equation}
Here, $\epsilon$ is a small positive parameter and $a$ is a positive constant satisfying
\begin{equation}
	\bB^\eta(\bw)
	+\epsilon(a^2 \bI_{8}- \bA(\bw)\otimes\bA(\bw))^\eta>0,
\end{equation}
where $\bA(\bw)=\nabla_\bw \bff(\bw)$, $\bB^\eta(\bw)$ and $(a^2 \bI_{8}- \bA(\bw)\otimes\bA(\bw))^\eta$ are defined in the same manner as in \eqref{Beta}, and $\eta(\bw)$ is the mathematical entropy given as in \eqref{entropy}. Let
\begin{equation}
	\begin{aligned}
		&\bF_{\bfv_X}(\bw)=(0,\frac{4\mu}{3\epsilon} u_X,\frac{\mu}{\epsilon} u_Y,\kappa T+\frac{\mu}{2\epsilon}(\frac{4}{3}u^2_X+u^2_Y))^\top+a^2\bw^\top,\\
		&\bF_{\bfv_Y}(\bw)=(0,-\frac{2\mu}{3\epsilon} u_Y, \frac{\mu}{\epsilon} u_X,0)^\top,\quad
		\bG_{\bfv_X}(\bw)=(0,\frac{\mu}{\epsilon} u_Y, -\frac{2\mu}{3\epsilon} u_X,0)^\top,\\
		&\bG_{\bfv_Y}(\bw)=(0,\frac{\mu}{\epsilon} u_X,\frac{4\mu}{3\epsilon} u_Y,\kappa T+\frac{\mu}{2\epsilon}(u^2_X+\frac{4}{3}u^2_Y))^\top+a^2\bw^\top.	
	\end{aligned}
\end{equation}
Then, the relaxation system \eqref{2DCNS3} can be rewritten as 
\begin{equation}
	\pt_t \bfU +\pt_x \bF(\bfU^X)+\pt_y \bG(\bfU^Y)+\bh(\tbw,\pt_x\tbw,\pt_y \tbw)=\bH(\bfU), \label{2DCNS4}
\end{equation}
where
\begin{equation}
	\begin{aligned}
		&\bfU=(\bw,\bfv_X,\bfv_Y)^\top, \,\,\,\,\,\bfU^X=(\bw,\bfv_X)^\top,\,\,\,\,\,\bfU^Y=(\bw,\bfv_Y)^\top,\\
		&\bF(\bfU^X)=(\bfv_X,\bF_{\bfv_X}(\bw),\bF_{\bfv_Y}(\bw))^\top,\,\,\,\,\,
		\bG(\bfU^Y)=(\bfv_Y,\bG_{\bfv_X}(\bw),\bG_{\bfv_Y}(\bw))^\top,\\
		&\bH(\bfU)=(\mathbf{0},\frac{1}{\epsilon}(\bff_X(\bw)-\bfv_X),
		\frac{1}{\epsilon}(\bff_Y(\bw)-\bfv_Y))^\top,\\
		&\bh(\tbw,\pt_x \tbw,\pt_y \tbw)
		=(\mathbf{0},\bh_{\bfv_X}(\tbw,\pt_y \tbw),\bh_{\bfv_Y}(\tbw,\pt_x \tbw))^\top,\\
		&\bh_{\bfv_X}(\tbw,\pt_y \tbw)
		=(0,0,0,\frac{\mu}{\epsilon}u_Y\pt_y u_X-\frac{2}{3}\frac{\mu}{\epsilon}u_X\pt_y u_Y)^\top,\\
		&\bh_{\bfv_Y}(\tbw,\pt_x \tbw),
		=(0,0,0,\frac{\mu}{\epsilon}u_X\pt_x u_Y-\frac{2}{3}\frac{\mu}{\epsilon}u_Y\pt_x u_X)^\top.
	\end{aligned}
\end{equation}
Note that $\bF(\bfU^X)$ is independent of $\bfv_Y$ and $\bG(\bfU^Y)$ is independent of $\bfv_X$.

We continue to develop relaxation schemes for \eqref{2DCNS3} in the FV framework. Suppose that the computational domain $\Omega$ is divided into rectangular meshes $\Omega=\cup_{i,j}\Omega_{ij}$, where $\Omega_{ij}=(x_{i-1/2}, x_{i+1/2})\times(y_{j-1/2},y_{j+1/2})$ with $(x_i,y_j)$ as the center, $i=0,1,\dots,I$ and $j=0,1,\dots,J$. Then, a second-order LW type relaxation scheme for \eqref{2DCNS3} is proposed as follows. 

Suppose that the data at time $t=t_n$ are piecewise linear, i.e., 
\begin{equation}
	\begin{aligned}		
		&\bw(x,y,t_n)=\bw^n_{i,j}+(\pt_x \bw)^n_{i,j}(x-x_j)+(\pt_y \bw)^n_{i,j}(y-y_i),\\
		&\bfv_X(x,y,t_n)=(\bfv_X)^n_{i,j}+(\pt_x \bfv_X)^n_{i,j}(x-x_j),\\
		&\bfv_Y(x,y,t_n)=(\bfv_Y)^n_{i,j}+(\pt_y \bfv_Y)^n_{i,j}(y-y_i),\\
		& \tbw(x,y,t_n)=\tbw(\bw(x,y,t_n)),	
		\quad\quad   \forall(x,y)\in\Omega_{ij},
	\end{aligned}
\end{equation}
where $\bfU^n_{i,j}$ is the average value of $\bfU$ over the cell $\Omega_{ij}$ at time $t=t_n$, $(\pt_x \bw)^n_{i,j}$, $(\pt_x \bfv_X)^n_{i,j}$, $(\pt_y \bw)^n_{i,j}$ and $(\pt_y \bfv_Y)^n_{i,j}$ stand for the slopes.

Then, a second-order LW type scheme for \eqref{2DCNS4} can be written as
\begin{equation}
	\begin{aligned}
		\bfU^{n+1}_{i,j}=&\bfU^{n}_{i,j}-\frac{\Delta t}{\Delta x_i}		\big(\bF^{n+1/2}_{i+1/2,j}-\bF^{n+1/2}_{i-1/2,j}\big)
		-\frac{\Delta t}{\Delta y_j}
		\big(\bG^{n+1/2}_{i,j+1/2}-\bG^{n+1/2}_{i,j-1/2}\big)\\
		&-\Delta t\bh^{n+1/2}_{i,j}
		+\frac{\Delta t}{2}\big(\bH^n_{i,j}+\bH^{n+1}_{i,j}\big).\label{2Devolution}
	\end{aligned}  
\end{equation} 
We use the notations that $\Delta x_i=x_{i+1/2}-x_{i-1/2}$, $\Delta y_j=y_{j+1/2}-y_{j-1/2}$, and 
\begin{equation}
	\begin{aligned}
		&\bF^{n+1/2}_{i+1/2,j}=\bF\big(\left(\bfU^X\right)^{n+1/2}_{i+1/2,j}\big),
		\quad
		\bG^{n+1/2}_{i,j+1/2}=\bG\big(\left(\bfU^Y\right)^{n+1/2}_{i,j+1/2}\big),
		\\		
		&	\bH^n_{i,j}=\bH\left(\bfU^n_{i,j}\right),\quad
		\bh^{n+1/2}_{i,j}=(\mathbf{0},(\bh_{\bfv_X})^{n+1/2}_{i,j},(\bh_{\bfv_Y})^{n+1/2}_{i,j})^\top,
	\end{aligned} \label{2Dflux}
\end{equation}
where
\begin{equation}
	\begin{aligned}
		&(\bh_{\bfv_X})^{n+1/2}_{i,j}
		=\bh\big(\frac{1}{2}(\tbw^{n+1/2}_{i,j+1/2}+\tbw^{n+1/2}_{i,j-1/2}),
		\frac{1}{\Delta y_j}(\tbw^{n+1/2}_{i,j+1/2}-\tbw^{n+1/2}_{i,j-1/2}) \big),\\
		&(\bh_{\bfv_Y})^{n+1/2}_{i,j}
		=\bh\big(\frac{1}{2}(\tbw^{n+1/2}_{i+1/2,j}+\tbw^{n+1/2}_{i-1/2,j}),
		\frac{1}{\Delta x_i}(\tbw^{n+1/2}_{i+1/2,j}-\tbw^{n+1/2}_{i-1/2,j}) \big),\\
		&\tbw^{n+1/2}_{i,j+1/2}:=\tbw(\bw^{n+1/2}_{i,j+1/2}).	
	\end{aligned} 
\end{equation}
Note that it suffices to determine the mid-point values $(\bfU^X)^{n+1/2}_{i+1/2,j}$ and $(\bfU^Y)^{n+1/2}_{i,j+1/2}$. For brevity, in the following we present only the evaluation of $(\bfU^X)^{n+1/2}_{i+1/2,j}$. The value of $(\bfU^Y)^{n+1/2}_{i,j+1/2}$ can be obtained similarly.

The mid-point value $(\bfU^X)^{n+1/2}_{j+1/2,i}$ can be computed by the formula
\begin{equation}
	\big(\bfU^X\big)^{n+1/2}_{i+1/2,j}
	=\big(\bfU^X\big)^{n}_{i+1/2,j}
	+\frac{\Delta t}{2}\big(\pt_t \bfU^X\big)^{n+1/4}_{i+1/2,j}.	\label{2Dmid-point}
\end{equation}  

To obtain $\left(\bfU^X\right)^{n}_{i+1/2,j}$, at each point $(x_{i+1/2},y_j,t_n)$, we solve the 1-D RP
\begin{equation}
	\left.\begin{cases}	
		\begin{aligned}
			&\pt_t\bfU^X+\pt_x \bF^X(\bfU^X)=0, \\
			&\bfU^X(x,y_j,0)=\left.\begin{cases}
				(\bfU^X)^{n,l}(x_{i+1/2}-0,y_j),\,\,\,\,\,\quad x<x_{i+1/2},\\
				(\bfU^X)^{n,r}(x_{i+1/2}+0,y_j),\,\,\,\,\,\quad x>x_{i+1/2},
			\end{cases}	
			\right.
		\end{aligned}
	\end{cases}	\right.\label{2DRP}
\end{equation}
by fixing a $y$-coordinate for $y=y_j$, where $\bF^X(\bfU^X)=(\bfv_X,\bF_{\bfv_X}(\bw))^\top$, $(\bfU^X)^{n,l}(x,y)$ and $(\bfU^X)^{m,r}(x,y)$ are two linear polynomials defined on the two neighboring computational cells, respectively. The RP \eqref{2DRP} can be solved by approximate Riemann solvers, such as a Roe-type solver \cite{Roe1981} or HLLC-type solver \cite{BCLC1997}. 

To determine $(\pt_t\bfU^X)^{n}_{i+1/2,j}$, we need to solve the so-called quasi 1-D GRP
\begin{equation}
	\left.\begin{cases}	
		\begin{aligned}
			&\pt_t\bfU^X+\pt_x \bF^X(\bfU^X)+\pt_y\bG^X(\bfU^Y)+\bh^X(\tbw,\pt_y \tbw)=\bH^X(\bfU^X), \\
			&\bfU^X(x,y,0)=\left.\begin{cases}
				(\bfU^X)^{n,l}(x,y),\,\,\,\,\,\quad x<x_{i+1/2},\\
				(\bfU^X)^{n,r}(x,y),\,\,\,\,\,\quad x>x_{i+1/2},
			\end{cases}	
			\right.
		\end{aligned}
	\end{cases}	\right.\label{2DGRP0}
\end{equation}
where 
\begin{equation}
	\begin{aligned}
		&\bG^X(\bfU^Y)=(\bfv_Y,\bG_{\bfv_X}(\bw))^\top,\,\,\,\,\,
		\bh^X(\tbw,\pt_y \tbw)=(\mathbf{0},\bh_{\bfv_X}(\tbw,\pt_y \tbw))^\top,\\
		&\bH^X(\bfU^X)=(\mathbf{0},\frac{1}{\epsilon}(\bff_X(\bw)-\bfv_X))^\top.
	\end{aligned}
\end{equation}
Note that $\bG^X(\bfU^Y)$ is linear in $\bfv_Y$. Thus, we can linearize the convection term in \eqref{2DGRP0} around the approximate local Riemann solution $(\bfU^X)^{n}_{i+1/2,j}$ to derive 
\begin{equation}
	\left.\begin{cases}	
		\begin{aligned}
			&\pt_t\bfU^X+M^X\pt_x \bfU^X=-M^Y\pt_y\bfU^Y+\bH^X(\bfU^X), \\
			&\bfU^X(x,y_j,0)=\left.\begin{cases}
				(\bfU^X)^l(x,y_j),\,\,\,\,\,\quad x<x_{i+1/2},\\
				(\bfU^X)^r(x,y_j),\,\,\,\,\,\quad x>x_{i+1/2},
			\end{cases}	
			\right.
		\end{aligned}
	\end{cases}	\right.\label{2DGRP}
\end{equation}
where the $y$-coordinate is fixed for $y=y_j$, and
\begin{equation*}
	\begin{small}
		M^X=\begin{pmatrix}
			\mathbf{0}\,&\bI_4\,\\
			\pt_\bw\bF_{\bfv_X}(\bw^{n}_{i+1/2,j})\,&\mathbf{0}\,\\
		\end{pmatrix},
		\,\,\,
		M^Y=\begin{pmatrix}
			\mathbf{0} \, &\bI_4\,\\
			\pt_\bw\bG_{\bfv_X}(\bw^{n}_{i+1/2,j})+\pt_\bw\bh_{\bfv_X}(\bw^{n}_{i+1/2,j})\,
			&\mathbf{0}\,\\
		\end{pmatrix}.
	\end{small}
\end{equation*}
Then, based on the 2-D acoustic GRP solver \cite{LD2016}, the 2-D IMEX-GRP solver for \eqref{2DGRP} takes the form 
\begin{equation*}
	\begin{aligned}
		(\pt_t\bfU^X)^{n+1/4}_{i+1/2,j}
		=&-R\Lambda^+R^{-1}(\pt_x\bfU^X)^n_{i,j}
		-RI^+R^{-1}M^Y(\pt_y\bfU^Y)^n_{i,j}+H\big((\bfU^X)^{n+1/2}_{i+1/2,j}\big)\\
		&-R\Lambda^-R^{-1}(\pt_x\bfU^X)^n_{i+1,j}-RI^-R^{-1}M^Y(\pt_y\bfU^Y)^n_{i+1,j}
		, \label{2DNSIMEXGRP}
	\end{aligned}
\end{equation*}
where $\Lambda =\diag(\lambda_1,\dots,\lambda_8)$, $\lambda_k$, $k= 1,\dots, 8$, are the eigenvalues of $M^X$, $R$ is the right eigenmatrix of $M^X$,  $\Lambda^+=\diag(\max(\lambda_k,0))$, $\Lambda^-=\diag(\min(\lambda_k,0))$, 
$I^+=\frac{1}{2}\diag(1+\sign(\lambda_k))$ and $I^-=\frac{1}{2}\diag(1-\sign(\lambda_k))$.

Finally, the slopes can be updated in analogy with the 1-D case, see \eqref{1Dslope}. 

\begin{remark}
	For \eqref{2DGRP}, since we just want to construct the flux normal to cell interfaces, the tangential effect is regarded as a source. That is, we solve the 1-D GRP at a point $(x_{i+1/2}, y_j)$ on the interface by considering the effect tangential to the interface $x=x_{i+1/2}$. As is pointed out in \cite{LD2016,LL2019}, it is different from the 1-D version in that the multidimensional effect is included.
\end{remark}

\section{Numerical examples} \label{sectest}
Several benchmark problems are tested to illustrate the performance of current schemes for the 1-D viscous Burgers, and 1-D and 2-D Navier-Stokes system.

For the schemes proposed in \Cref{example4} and \Cref{example5}, the approximate local Riemann solutions $U^{n}_{j+1/2}$ and $(\bfU^X)^{n}_{i+1/2,j}$ are obtained by Roe-type solvers as in \cite{Roe1981}. The details for the construction of the 1-D Roe-type solver can be found in the \Cref{ROE}, and the 2-D case can be addressed similar to the 1-D case with some non-essential adaption. The numerical treatment of boundary conditions for the current relaxation schemes can be found in \Cref{boundarytreatment}.

\begin{test}[Smooth problem]
	The first test is the initial-boundary value problem (IBVP) of the 1-D viscous Burgers equation studied by Asaithambi \cite{A2010}:
	\begin{equation}
		u(x,0)=\frac{2\mu\pi\sin(\pi x)}{2+\cos(\pi x)},\,\,\,\mathrm{and}
		\,\,\,\,\, u(0,t)=0,\,\,u(1,t)=0,\,\,\,\,\, x\in[0,1],\, t>0.
	\end{equation}
	The exact solution is 
	\begin{equation}
		u(x,t)=\frac{2\mu\pi \sin(\pi x)\exp(-\pi^2\mu t)}{2+\exp(-\pi^2\mu t)\cos(\pi x)}.
	\end{equation}
	The purpose of this test is to verify the order of accuracy of the scheme \eqref{exampleB0}--\eqref{exampleBend}. In this test, a uniform mesh with $N$ grid points is used. Numerical simulations are conducted for $\epsilon=((\Delta x)^2/\mu)^k$ with $k=1$ and $k=0.9$. Other parameters are set as $\mu=0.01$ and $a=0$.  The CFL number takes a value of $0.7$.
	
	The $L^1$, $L^\infty$-errors and orders at time $t=10.0$ are presented in \cref{Tab:Burgers}, in which ``Nstep'' represents the number of time-stepping. Note that for $k=1$, the expected second-order accuracy can be achieved. For $k=0.9$, there is a gain on the number of time-stepping compared to the case that $k=1$, while the second-order accuracy is lost, since the relaxation model error at order of $\epsilon\approx (\Delta x)^{1.8}$ dominates the numerical error at order of $(\Delta x)^2+\Delta t^2\approx (\Delta x)^{1.9}$. 
	
	\begin{table}[htbp]
		\centering
		\setlength{\abovecaptionskip}{0.0cm}
		\setlength{\belowcaptionskip}{0.0cm}
		\setlength{\tabcolsep}{0.45mm}
		\begin{tabular}{c|ccccc|ccccc}
			\toprule
			& \multicolumn{5}{c|}{$\epsilon=((\Delta x)^2/\mu)^k$,\,\,$k=1$} 
			& \multicolumn{5}{c}{$\epsilon=((\Delta x)^2/\mu)^k$,\,\,$k=0.9$}\\
			\hline
			N  & Nstep  &$L^1$ error&  Order   &$L^\infty$ error  &  Order
			& Nstep  &$L^1$ error&  Order   &$L^\infty$ error  &  Order     \\
			\hline
			64   &586    &4.019e-05 &        &6.437e-05   &           
			&487    &5.934e-05 &        &9.502e-05   &            \\
			128  &2341   &1.017e-05 &1.983   &1.623e-05   &1.988  
			&1814   &1.730e-05 &1.778   &2.762e-05   &1.783      \\
			256  &9363   &2.557e-06 &1.991   &4.077e-06   &1.993    
			&6770   &5.015e-06 &1.786   &7.996e-06   &1.788       \\
			512  &37450  &6.412e-07 &1.996   &1.022e-06   &1.997      
			&25265  &1.449e-06 &1.791   &2.309e-06   &1.792        \\
			1024 &149797 &1.605e-07 &1.998   &2.557e-07   &1.998  
			&94292  &4.181e-07 &1.793   &6.661e-07   &1.794     \\
			\bottomrule
		\end{tabular}
		\caption{\small Accuracy test for the IBVP of 1-D viscous Burgers equation at $t = 10.0$.}     \label{Tab:Burgers}
	\end{table}  
\end{test}
\begin{test}[Manufactured solution]
	In this test, we employ the following manufactured solution to assess the accuracy of the scheme proposed in \Cref{example4}:
	\begin{equation}
		\small
		\rho(x,y,t)=1+0.2\sin(\pi(x+y-2t)),\;\; U(x,y,t)=1,\;\; V(x,y,t)=1,\;\; p(x,y,t)=1. \label{manufacture}
	\end{equation}
	Here, and in the sequel, $U$ and $V$ denote the velocity components in $x$- and $y$- directions, respectively. The computational domain is $\Omega =[0, 2]\times[0, 2]$ with periodic boundary conditions. In the computation, a uniform mesh with $N\times N$ grid points is used. The parameters are set as $\gamma=1.4$, $Pr=0.72$, $\mu=0.001$, $a=1.2$ and $\epsilon=0.1 (\Delta x)^2$ or $\epsilon=\mu/100$. The CFL number takes a value of $0.3$.
	
	The $L^1$- and $L^\infty$-errors and orders for the cell averages of the density $\rho$ at $t=1.0$ with $\epsilon=0.1 (\Delta x)^2$ and $\epsilon=\mu/100$ are presented in \cref{Tab:2DNS}. The numerical error consists of two parts, the discretization error of order $(\Delta x)^2$ and the relaxation error of order $\ep$. When keeping $\ep=O((\Delta x)^2)$, the second-order accuracy can be achieved. When fixing $\ep$ as a constant, the second-order accuracy is lost for small $\Delta x$. 
	
	\begin{table}[htbp]
		\centering
		\setlength{\abovecaptionskip}{0.0cm}
		\setlength{\belowcaptionskip}{0.0cm}
		\setlength{\tabcolsep}{1.5mm}
		\begin{tabular}{c|cc|cc|cc|cc}
			\toprule
			\footnotesize
			& \multicolumn{4}{c|}{$\epsilon=0.1 (\Delta x)^2$}
			& \multicolumn{4}{c}{$\epsilon=\mu/100$}\\
			\hline
			N &$L^1$ error&  Order   &$L^\infty$ error  &  Order
			&$L^1$ error&  Order   &$L^\infty$ error  &  Order     \\
			\hline
			16   &3.797e-03 &        &7.307e-03   &           
			&2.997e-03 &        &5.616e-03   &            \\
			32   &8.312e-04 &2.191   &1.426e-03   &2.357   
			&7.807e-04 &1.931   &1.417e-04   &1.987      \\
			64   &1.993e-04 &2.060   &3.463e-04   &2.042   
			&1.928e-04 &2.018   &3.445e-04   &2.040       \\
			128  &5.033e-05 &1.986   &8.716e-05   &1.990     
			&4.621e-05 &2.061   &7.948e-05   &2.116       \\
			256  &1.276e-04 &1.980   &2.199e-05   &1.986     
			&1.594e-05 &1.536   &2.761e-05   &1.525       \\
			\bottomrule
		\end{tabular}   
		\caption{\small Accuracy test for the manufactured solution \eqref{manufacture}: the $L^1$- and $L^\infty$-errors and orders for the density $\rho$ at $t=1.0$.}   \label{Tab:2DNS}  
	\end{table}  
\end{test}
\begin{test}[Sod problem]  \label{Sod}
	In this test, we apply the scheme proposed in \Cref{example4} to the shock tube problem by Sod \cite{Sod1978}. The computational domain is $\Omega=[0, 1]$ with non-reflecting boundary condition on both ends. The initial condition is given by 
	\begin{equation}
		(\rho,u,p)=\begin{cases}
			\begin{aligned}
				&(1, 0, 1),\,\,\,\,\, &0 < x < 0.5,\\
				&(0.125, 0, 0.1), \,\,\,\,\, &0.5 < x < 1.
			\end{aligned}
		\end{cases}
	\end{equation}
	Numerical simulations are conducted for three different viscosity coefficients, i.e. $\mu=0.002$, $0.0005$ and $0.00005$. Other parameters are set as $\gamma=1.4$, $Pr=0.72$, $\epsilon=\mu/100$ and $a=1.0$. A uniform mesh with $200$ grid points is adopted. The CFL number takes a value of $0.7$. The minmod limiter \footnote{In order to suppress the local oscillations near discontinuities, we apply the minmod limiter (see \cite{BLW2006,vanLeer1979}) to $(U_x)^{n+1}_j$ in \eqref{1Dslope},
		\begin{align}
			\mathrm{minmod}\left(\alpha\frac{U^{n+1}_{j+1}-U^{n+1}_j}{\Delta x},
			(U_x)^{n+1}_j,\alpha\frac{U^{n+1}_j-U^{n+1}_{j-1}}{\Delta x}\right),	           \label{minmod}
		\end{align}
		where the parameter $\alpha\in[0,2]$.} with limiter parameter $\alpha=2.0$ is employed. 
	
	Numerical solutions at $t=0.2$ are shown in \cref{fig:RP}, where the reference is the exact solution of the Euler equations. From the figures we observe that the numerical solutions of the Navier-Stokes system is getting closer to the exact solution of the Euler system when $\mu$ becomes smaller, as expected.  
	\begin{figure}[htbp]
		\centering
		\setcaptionmargin{0cm}
		\setlength{\abovecaptionskip}{0cm}
		\setlength{\belowcaptionskip}{0cm}
		\subfigure{	\includegraphics[width=0.31\linewidth]{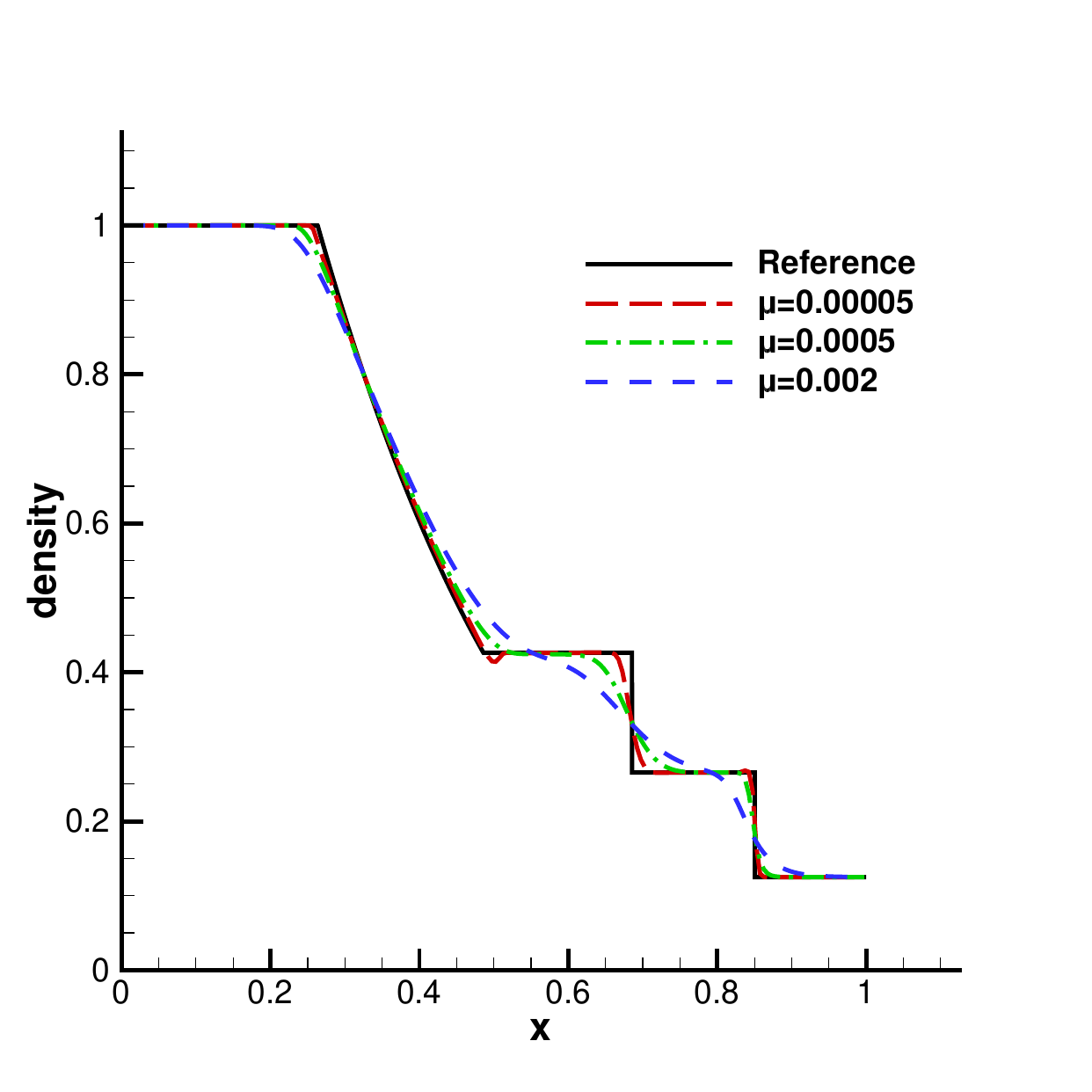}}
		\hspace{0mm}
		\subfigure{	\includegraphics[width=0.31\linewidth]{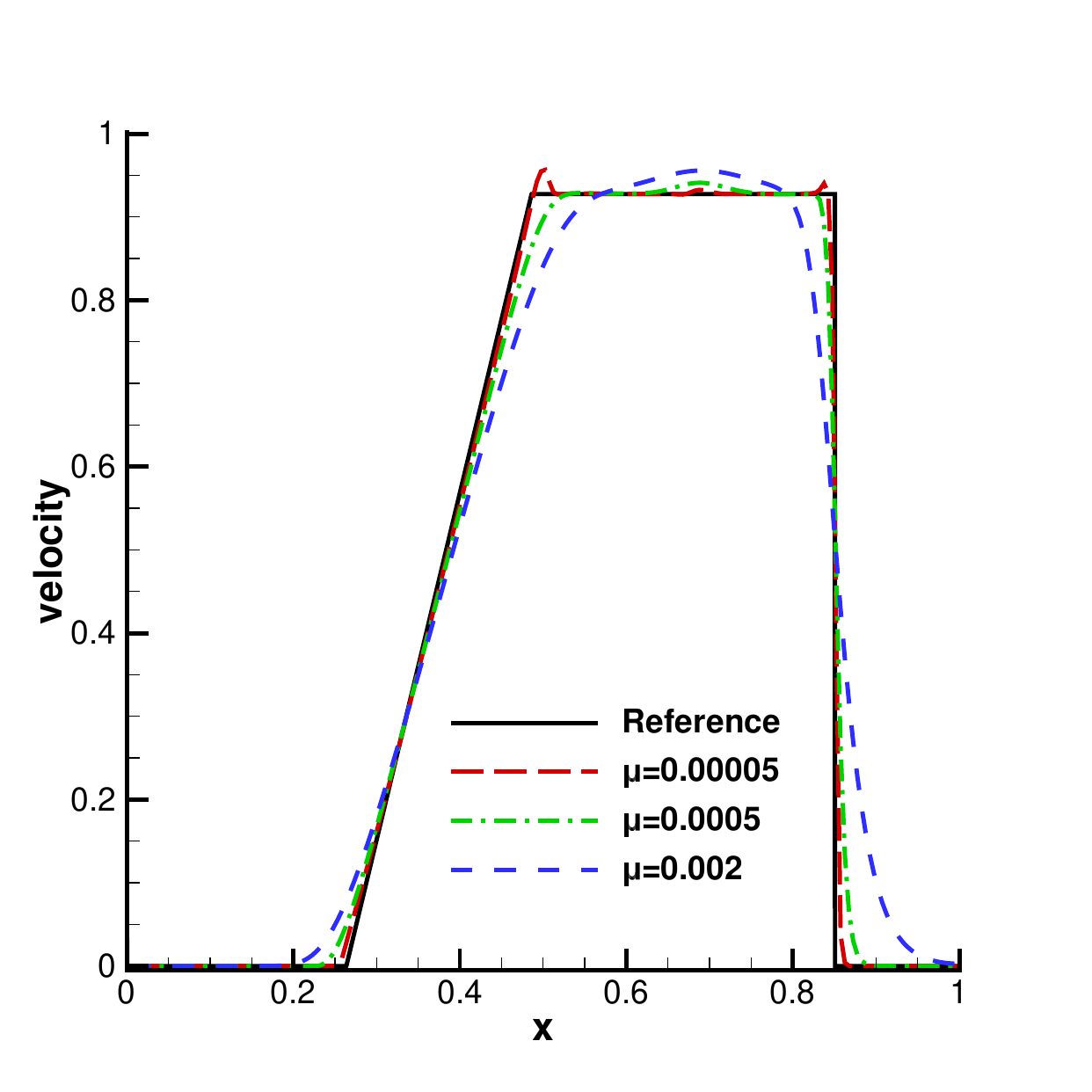}}
		\hspace{0mm}
		\subfigure{	\includegraphics[width=0.31\linewidth]{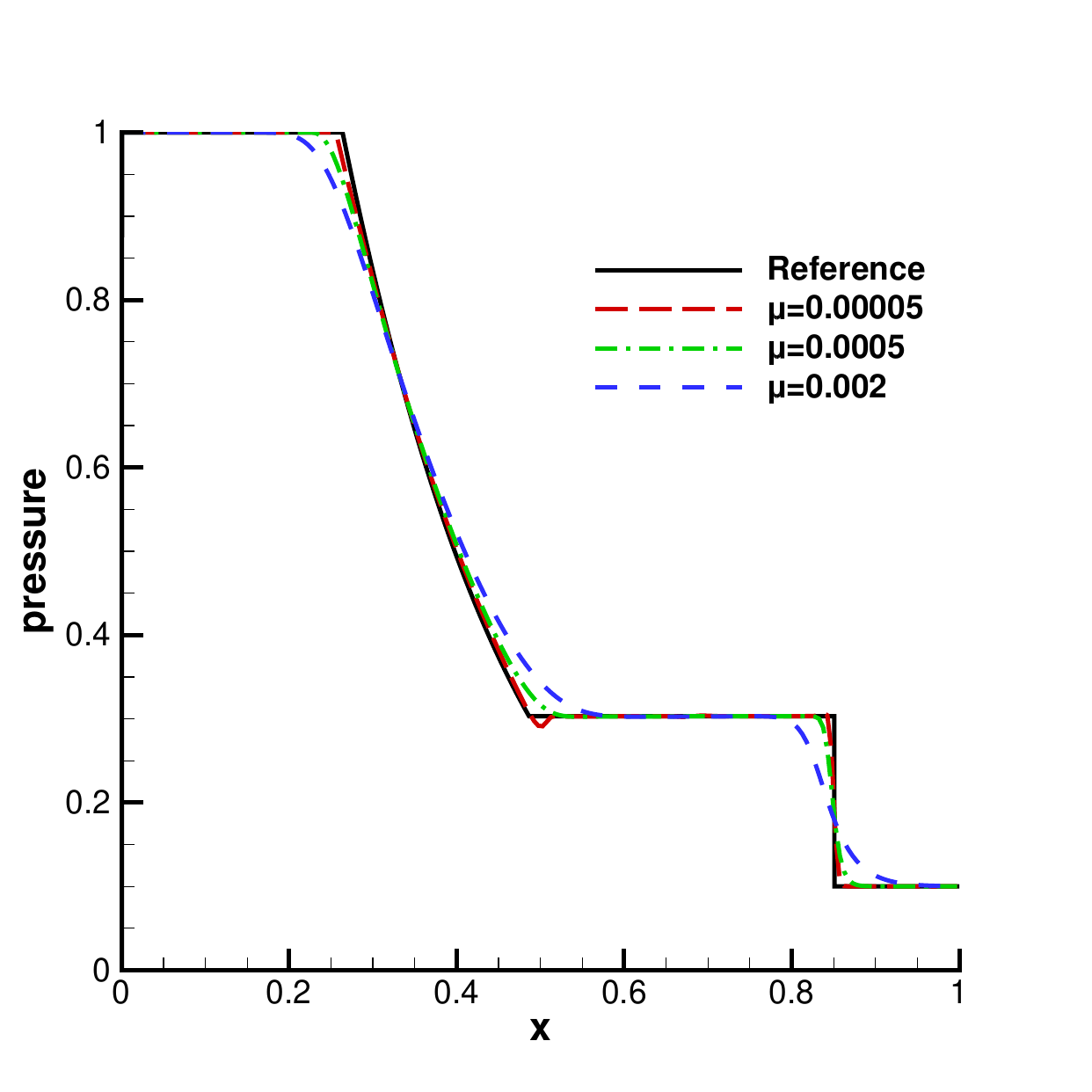}}
		\caption{\small Sod problem: numerical solutions at $t=0.2$ with $200$ grid points for different viscosity coefficients. The reference is the exact solution of the Euler equations.}
		\label{fig:RP}
	\end{figure}
\end{test}
\begin{test}[Laminar boundary layer]  \label{LBL}
	In this test, we apply the scheme proposed in \Cref{example5} to a laminar compressible flow over a flat plate with length $L=100$. The Mach number of the free-stream is $Ma=0.15$ and the Reynolds number is $Re=10^5$ which is based on the free-stream reference conditions and the plate length. As shown in \cref{fig:mesh}, the rectangular computation mesh contains $90\times45$ elements and the flat plate starts at point $(0, 0)$. The non-slip and adiabatic boundary condition is used at the plate and a symmetric boundary condition is imposed at the bottom boundary before the flat plate. The inflow boundary condition is adopted for the inlet plane. Moreover, the outflow boundary condition is enforced at the top and exit planes. The initial state is given by the free-stream. Other parameters are set as $\gamma=1.4$, $Pr=0.72$, $\epsilon=\mu/100$ and $a=1.5$. The CFL number takes a value of $0.3$.
	
	Note that for this free-stream Mach number, flow is nearly incompressible so that incompressible laminar boundary layer theory can be used to assess the performance of our method. Similar tests can be found in \cite{DY2022,PXLL2016}. We have computed till $t=150$, when the flow has approached a steady state. The non-dimensional $U$ and $V$ velocity components at different locations are presented in \cref{fig:Laminar}, where the solid lines are the exact Blasius solutions of $U$ and $V$. In all locations, the numerical solutions match with the exact Blasius solution well. At the upstream location, the boundary layer profile can be accurately captured with only seven grid points within the layer.
	\begin{figure}[htbp]
		\centering
		\setlength{\abovecaptionskip}{0.0cm}
		\setlength{\belowcaptionskip}{0.0cm}
		\subfigure{	\includegraphics[width=0.5\linewidth]{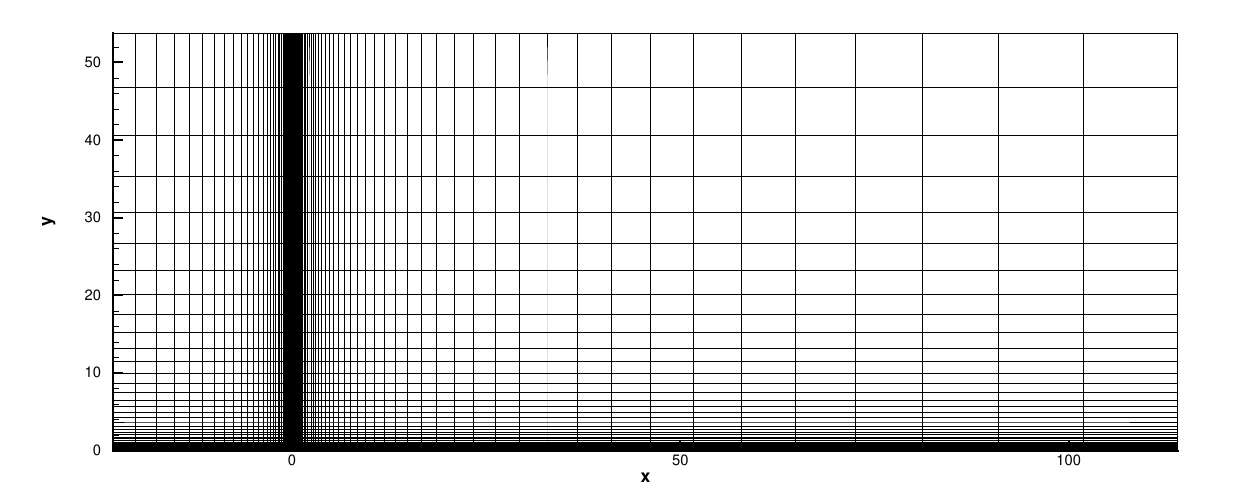}}
		\caption{\small Laminar boundary layer: rectangular meshes.}
		\label{fig:mesh}
	\end{figure}
	\begin{figure}[htbp]
		\centering
		\setlength{\abovecaptionskip}{0.0cm}
		\setlength{\belowcaptionskip}{0.0cm}
		\subfigure{	\includegraphics[width=0.45\linewidth]{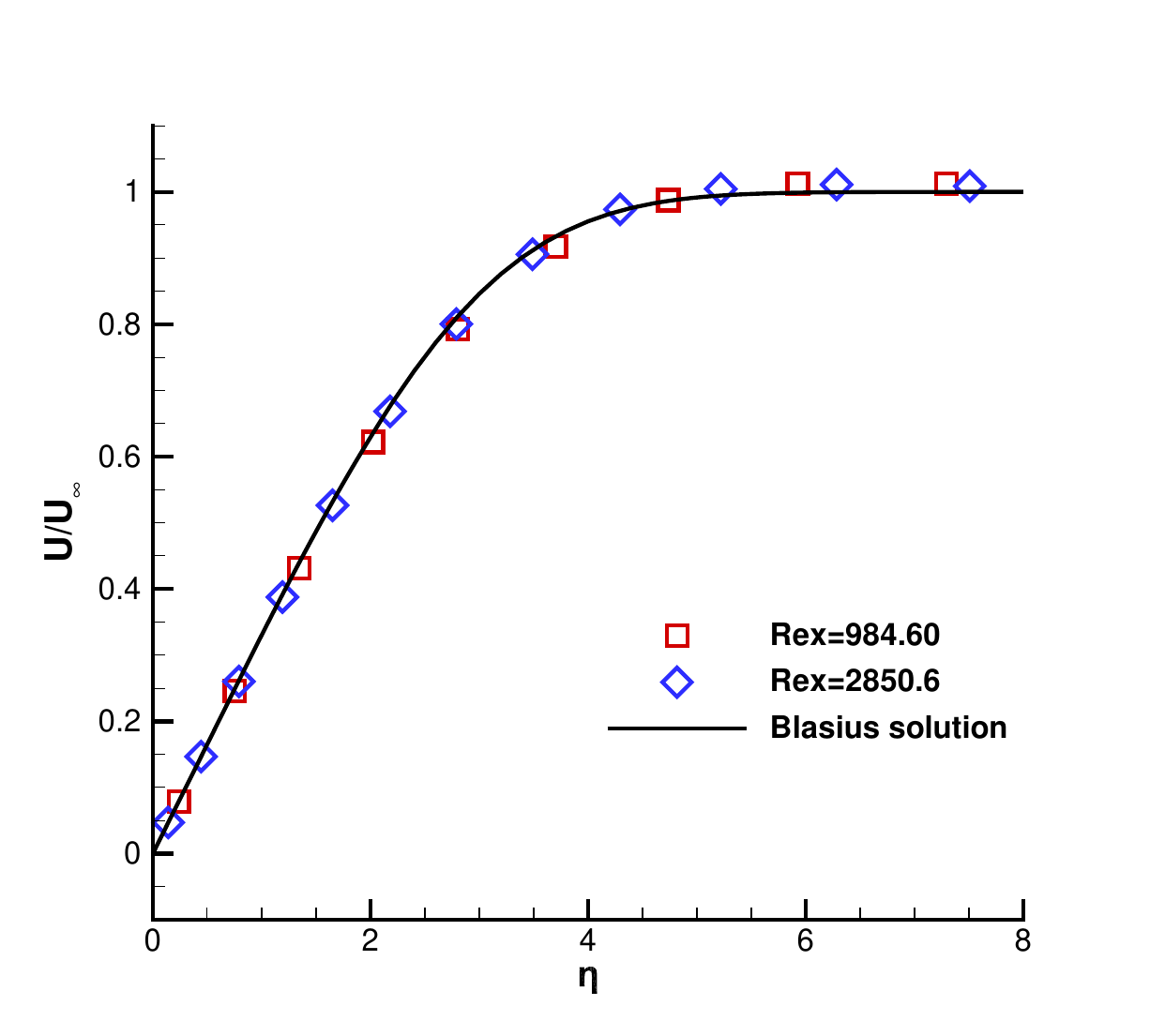}}
		\hspace{1mm}
		\subfigure{	\includegraphics[width=0.45\linewidth]{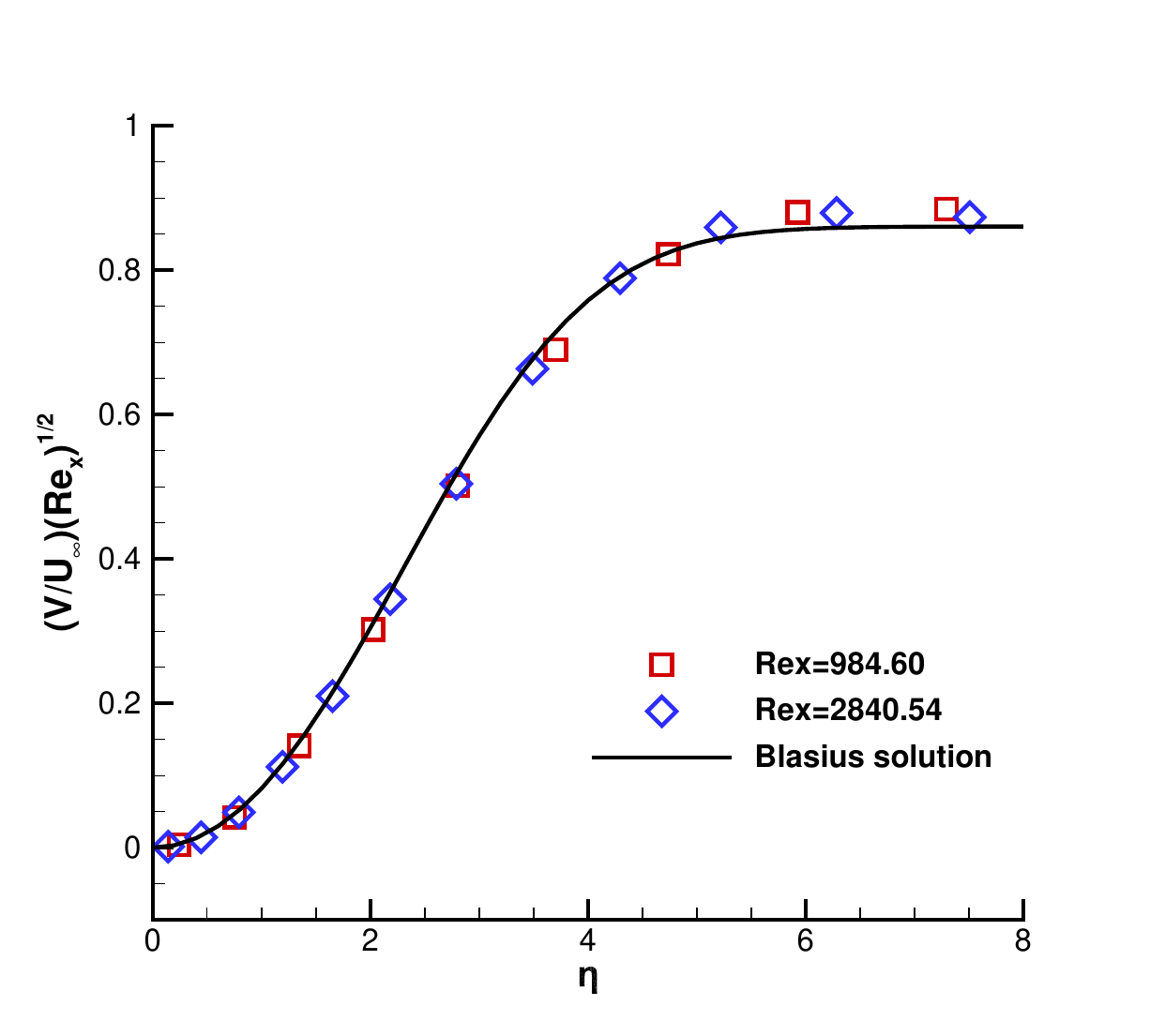}}
		\caption{\small Laminar boundary layer: the non-dimensional $U$ and $V$ velocity components at different locations for $Re=10^5$. $\eta=\frac{y}{x}\sqrt{Re_x}$ is the boundary-layer coordinate, and $Re_x=Re\frac{x}{L}$ is the Reynolds number based on the distance along the plate from the leading edge.}
		\label{fig:Laminar}
	\end{figure}
\end{test}
\begin{test}[Lid-driven cavity flow]  
	In this test, we apply the scheme proposed in \Cref{example5} to the lid-driven cavity flow, which is a typical benchmark for 2-D incompressible or low Mach number viscous flow. The fluid is bounded by a unit square $\Omega=[0, 1]\times[0, 1]$. The upper wall is moving with an horizontal speed $U_{b}=1$. Other walls are fixed. The non-slip and isothermal boundary conditions are applied to all boundaries with the temperature $T_{b}=(\gamma Ma^2)^{-1}$, where the Mach number is chosen to be $Ma=0.15$. The initial flow is stationary with the density $\rho_0=1$ and temperature $T_0=T_b$. Numerical simulations are conducted for two different Reynolds numbers, i.e., $Re = 400$ and $1000$. Other parameters are set as $\gamma=1.4$, $Pr=0.72$, $\epsilon=\mu/100$ and $a=2.0$. The computational domain $\Omega$ is covered uniformly with $85\times85$ mesh points and the CFL number takes a value of $0.3$. On such a coarse mesh, it is challenging to resolve the velocity profiles accurately.
	
	We have computed till $t=50$, when the flow has approached a steady state. The streamlines are shown in \cref{fig:streamline}. The primary flow structures including the primary and secondary vortices are well captured. The results of $U$-velocities along the center vertical line and $V$-velocities along the center horizontal line are shown in \cref{fig:UV}. The simulation results match well with the benchmark data \cite{Ghia1982}. The results demonstrate the accuracy of the current scheme.
	\begin{figure}[htbp]
		\centering
		\setlength{\abovecaptionskip}{0.0cm}
		\setlength{\belowcaptionskip}{0.0cm}
		\subfigure{	\includegraphics[width=0.35\linewidth]{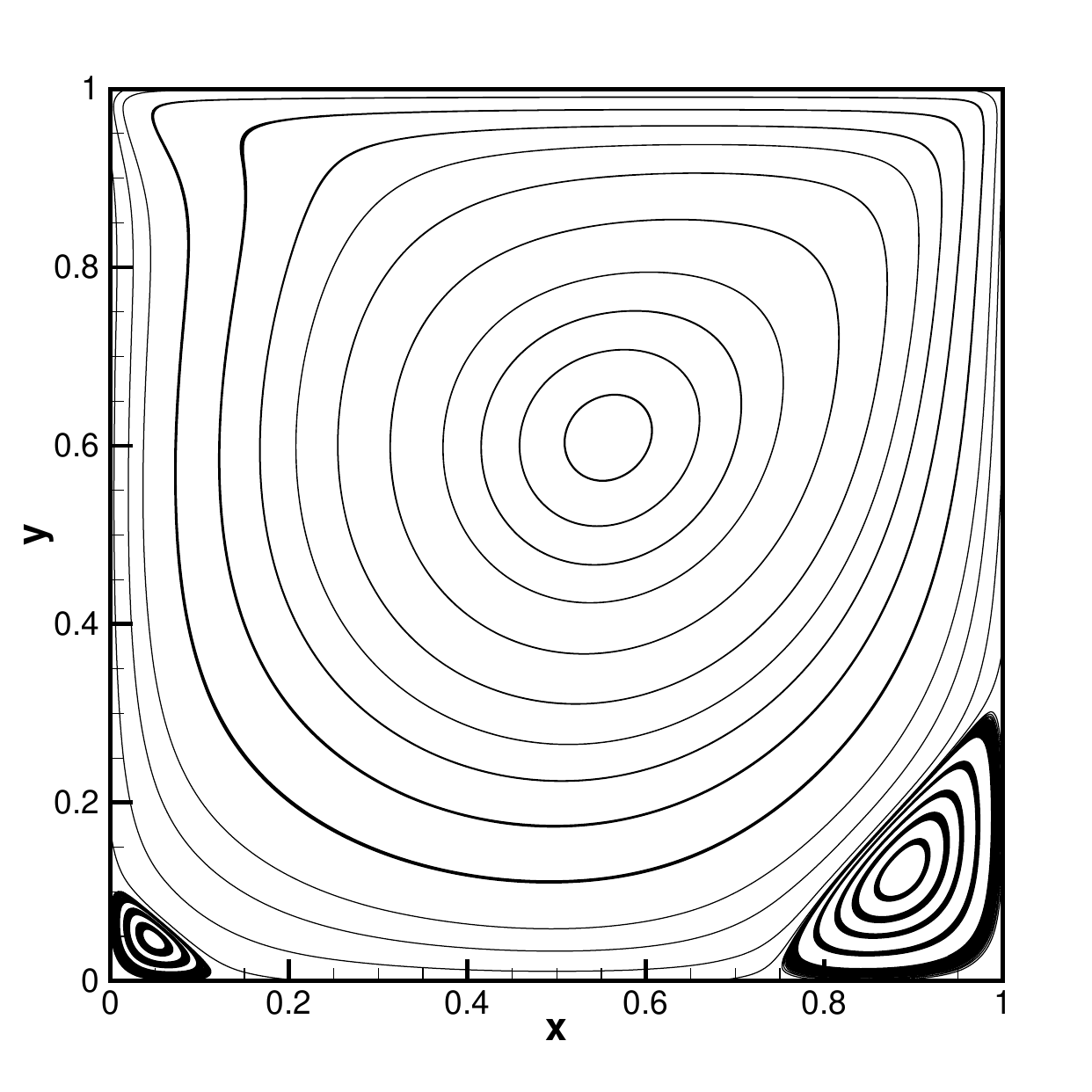}}
		\hspace{10mm}
		\subfigure{	\includegraphics[width=0.35\linewidth]{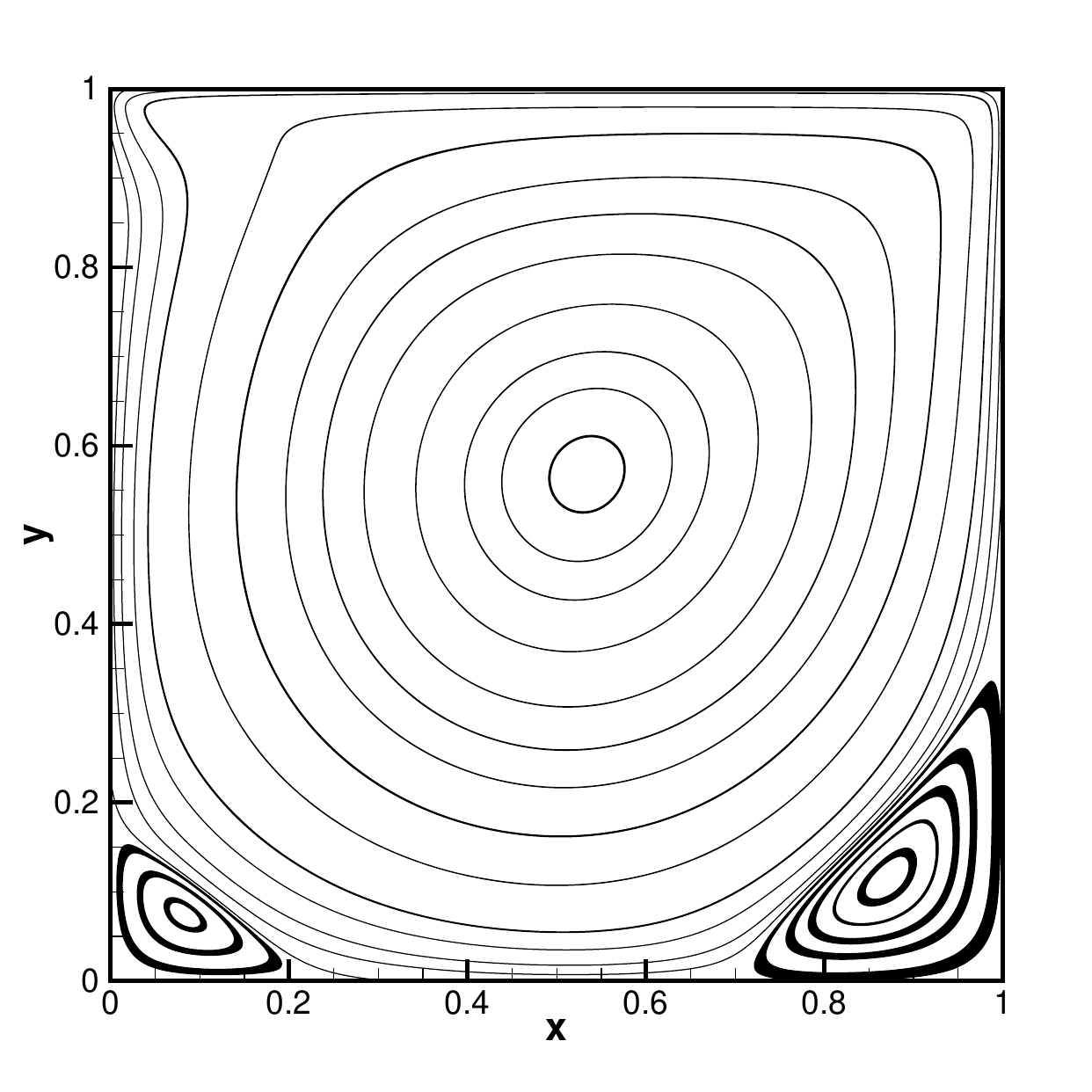}}
		\caption{\small Lid-driven cavity flow: the streamlines at Re = 400 (left) and 1000 (right) with $\Delta x=\Delta y=1/85$.}
		\label{fig:streamline}
	\end{figure}
	\begin{figure}[htbp]
		\centering
		\setlength{\abovecaptionskip}{0.0cm}
		\setlength{\belowcaptionskip}{0.0cm}
		\footnotesize
		\subfigure{	\includegraphics[width=0.40\linewidth]{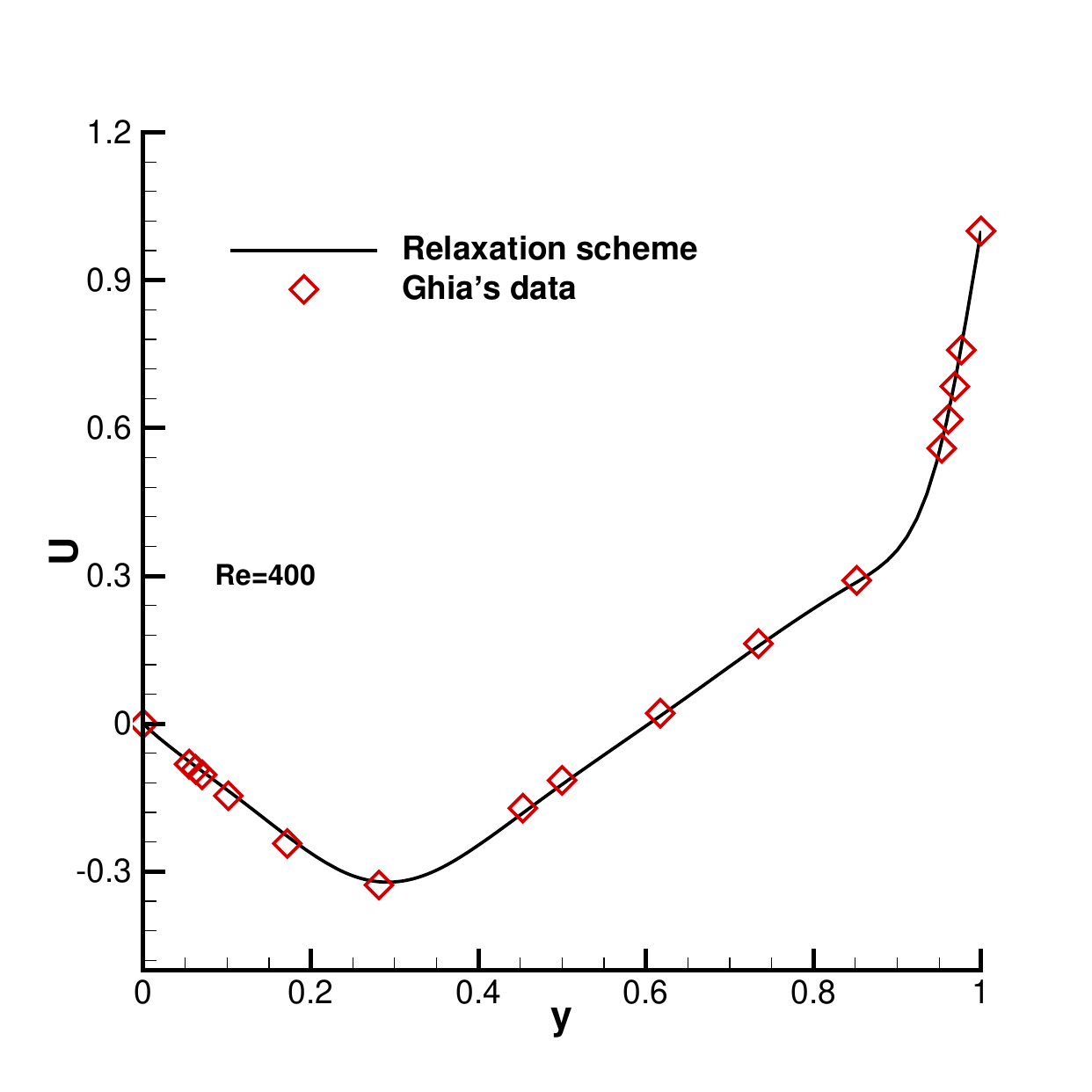}}
		\hspace{5mm}
		\subfigure{	\includegraphics[width=0.40\linewidth]{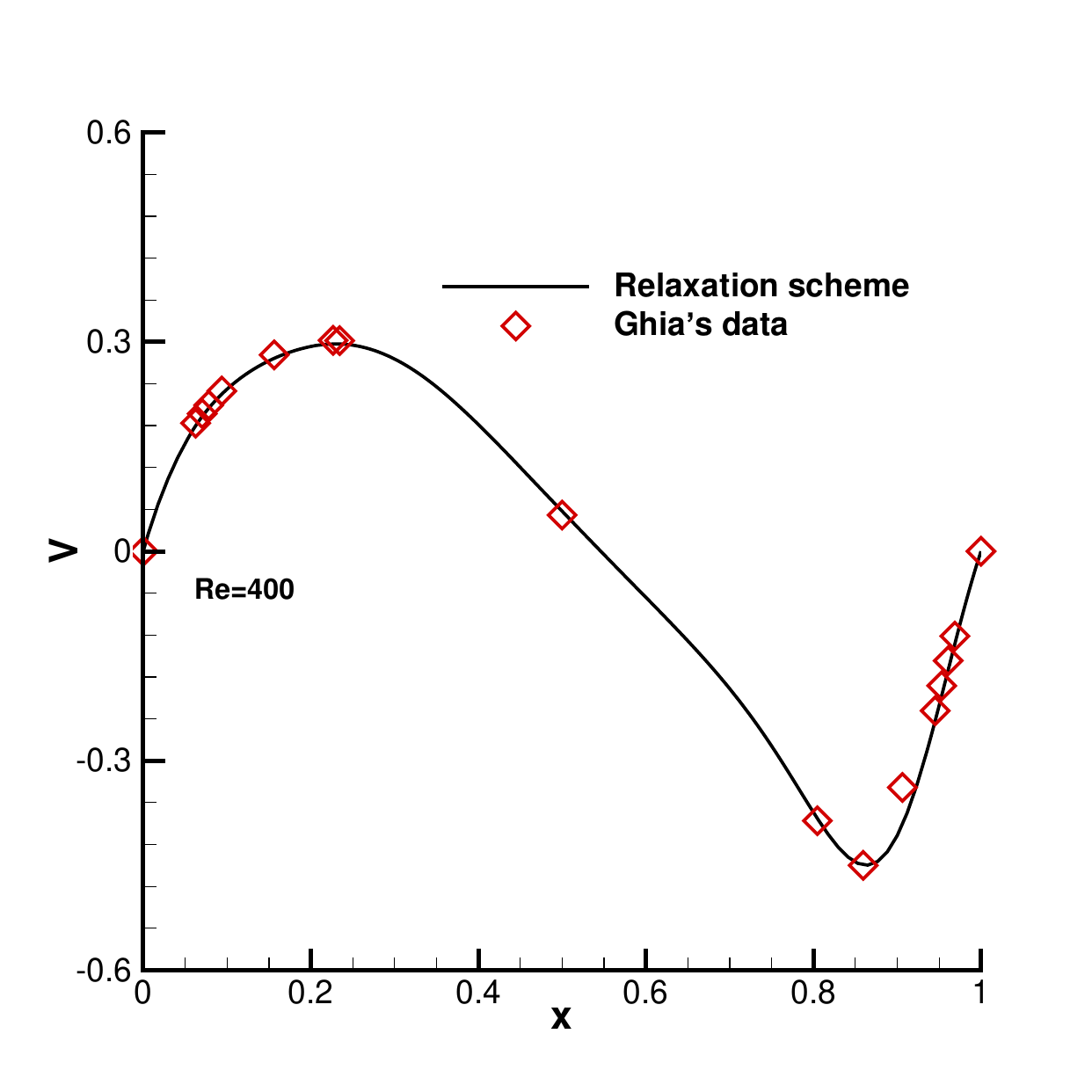}}
		\subfigure{	\includegraphics[width=0.40\linewidth]{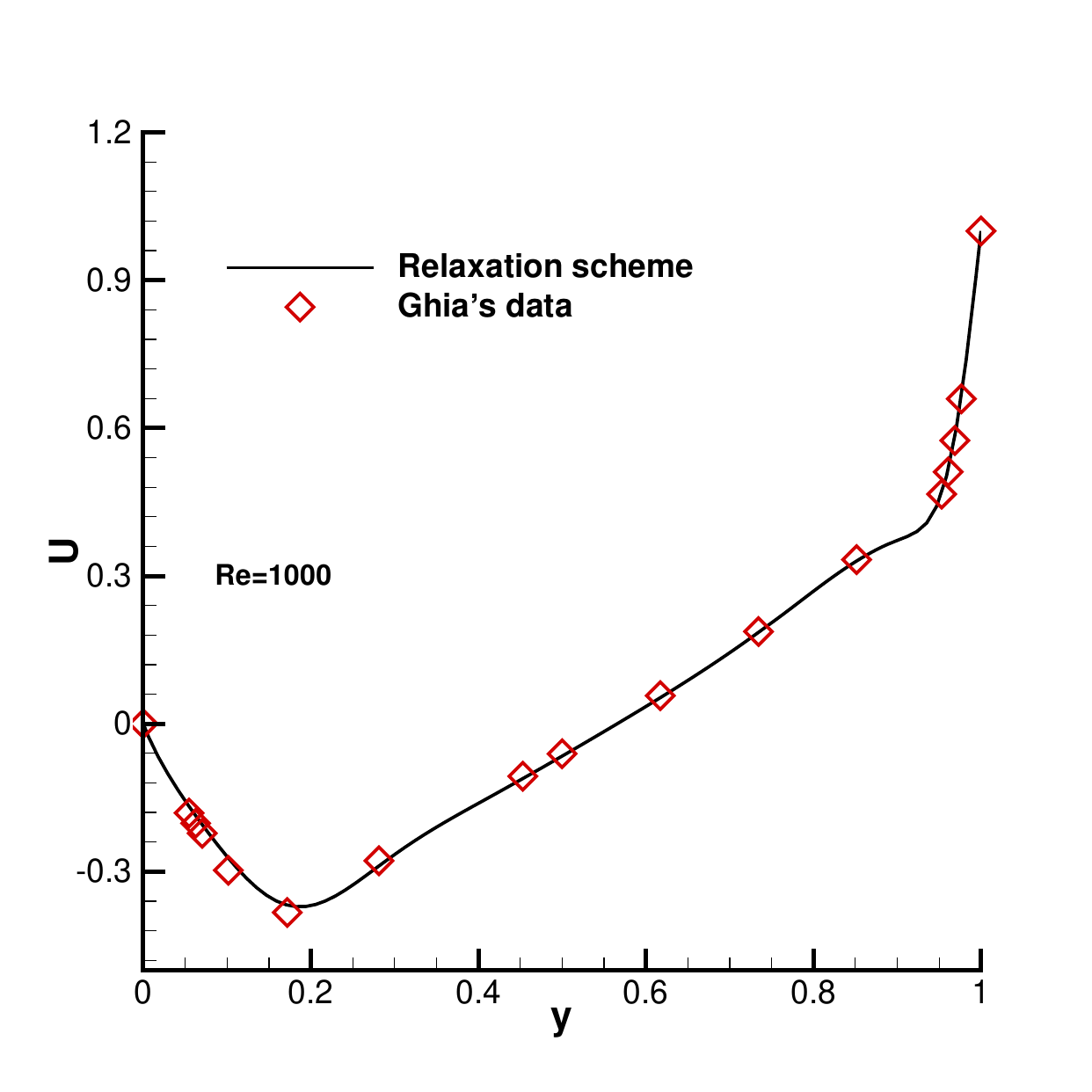}}
		\hspace{5mm}
		\subfigure{	\includegraphics[width=0.40\linewidth]{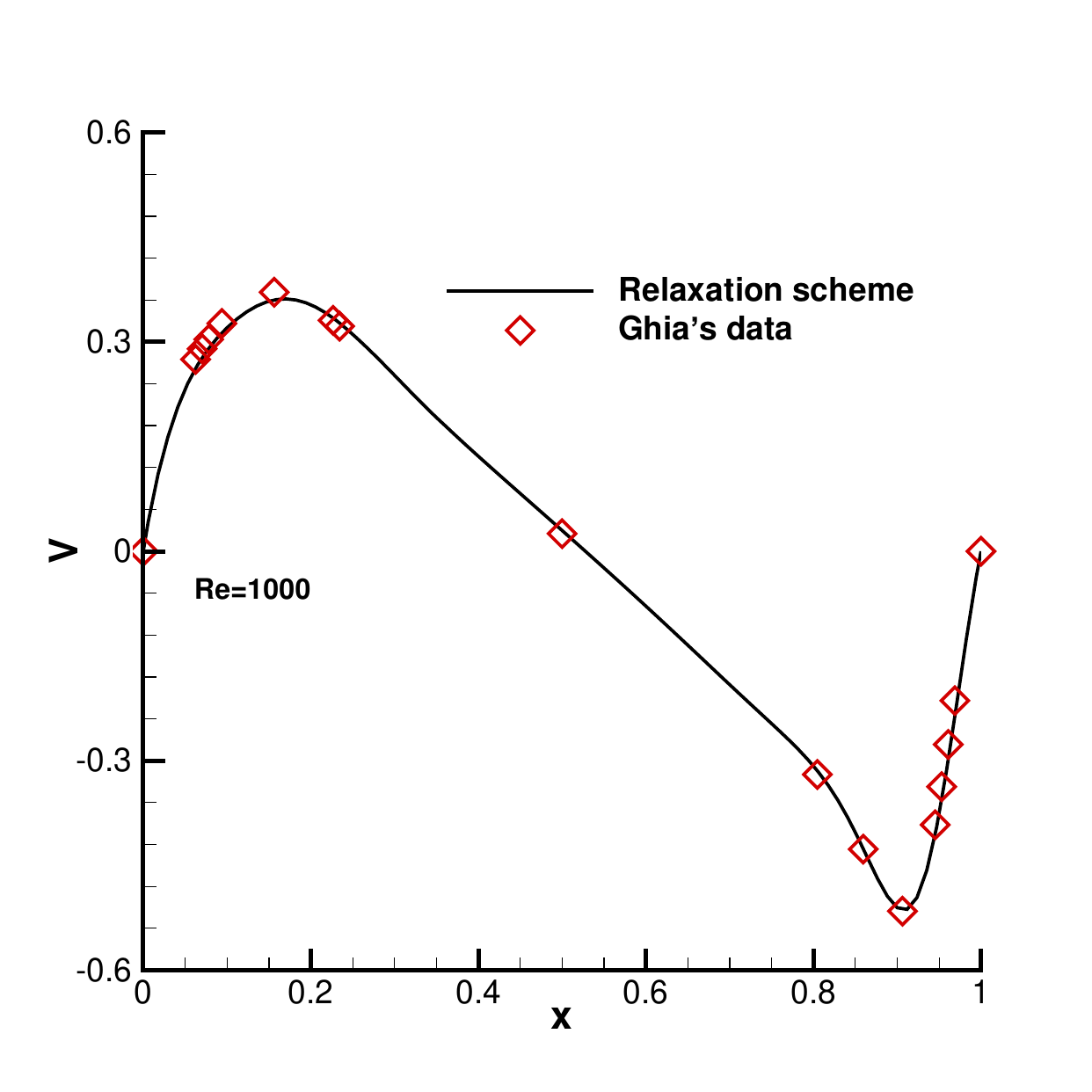}}
		\caption{\small Lid-driven cavity flow: $U$-velocity along vertical centerline line and $V$-velocity along horizontal centerline with $\Delta x=\Delta y=1/85$ for $Re=400$ and $1000$. The reference data are from Ghia et al.~\cite{Ghia1982}.}
		\label{fig:UV}
	\end{figure}
\end{test}
\begin{test}[Reflected shock-boundary layer interaction] \label{VSTP}
	In this test, we show the performance of the scheme proposed in \Cref{example5} for the viscous shock tube problem, which involves the interaction of reflected shock with boundary layer and has been studied extensively \cite{DT2009}. In this case, an ideal gas is at rest in a unit square $[0, 1]\times [0, 1]$. A membrane located at $x=0.5$ separates two different states of the gas and the dimensionless initial states are
	\begin{equation}
		(\rho, U, V, p) = \begin{cases}
			\begin{aligned}
				&(120, 0, 0, 120/\gamma ), \,\,\,\,\,\,\, &0\leq x\leq 0.5,\\
				&(1.2, 0, 0, 1.2/\gamma ), \,\,\,\,\,\,\, &0.5\leq x\leq 1.0.\\		
			\end{aligned}
		\end{cases}
	\end{equation}
	The reference velocity is based on the initial speed of sound, corresponding to a reference Mach number $M_0=1$. The membrane is removed at time zero and wave interaction occurs. A shock wave, followed by a contact discontinuity, moves right with a Mach number $Ma=2.37$ and reflects at the right end wall. After the reflection, it interacts with the contact discontinuity. The contact discontinuity and shock wave interact with the horizontal wall and create a thin boundary layer during their propagation. The solution will develop complex two-dimensional shock/shear/boundary-layer interactions, which requires not only strong robustness but also high resolution of a numerical scheme.
	
	The computational domain is set as $[0, 1] \times[0, 0.5]$. The symmetric boundary condition is applied on the upper boundary, and the non-slip and adiabatic boundary conditions are adopted on other boundaries. The Reynolds number $Re=200$ is chosen, which is based on a constant dynamic viscosity $\mu=0.005$. Other parameters are set as $\gamma=1.4$, $Pr=0.72$, $\epsilon=\mu/200$ and $a=0.7$. Numerical simulations are conducted for $400\times200$ and $600\times300$ uniform mesh points. The CFL number takes a value of $0.3$. The minmod limiter \eqref{minmod} with limiter parameter $\alpha=2.0$ is employed.
	
	\Cref{Tab:VSTP} presents the height of the primary vortex, achieving a good agreement with the reference data obtained by a high-order GKS (HGKS) method in \cite{ZXL2018} with the mesh size $\Delta x=\Delta y=1/1500$. \Cref{fig:VSTP} shows the density contours at $t=1.0$. The results match well with each other, and it can be observed that the complex flow structure are well resolved, including the lambda shock and the vortex structures. The density along the bottom wall is presented in \Cref{fig:VSTPden}, which match very well with the reference data. Though the relaxation scheme proposed in \Cref{example5} has only second-order accuracy, it resolves well such a complex flow, demonstrating the good performance of the current scheme in high-speed viscous flows.
	\begin{table}[htbp]
		\centering
		\setlength{\abovecaptionskip}{0.0cm}
		\setlength{\belowcaptionskip}{0.01cm}
		\setlength{\tabcolsep}{2.8mm}
		\begin{tabular}{cccc}
			\toprule
			Scheme   
			&\multicolumn{2}{c}{second-order relaxation scheme}   &  HGKS  \\
			\hline
			Mesh size         &$1/400$    & $1/600$   &$1/1500$  \\
			\hline
			Height   &0.160  & 0.164 &0.166 \\
			\bottomrule  
		\end{tabular}   
		\caption{\small Reflected shock-boundary layer interaction: comparison of the height of the primary vortex for $Re=200$. The reference data are from the HGKS method \cite{ZXL2018}.}   \label{Tab:VSTP}  
	\end{table}  
	\begin{figure}[htbp]
		\centering
		\setlength{\abovecaptionskip}{0.0cm}
		\setlength{\belowcaptionskip}{0.0cm}
		\subfigure{	\includegraphics[width=0.6\linewidth]{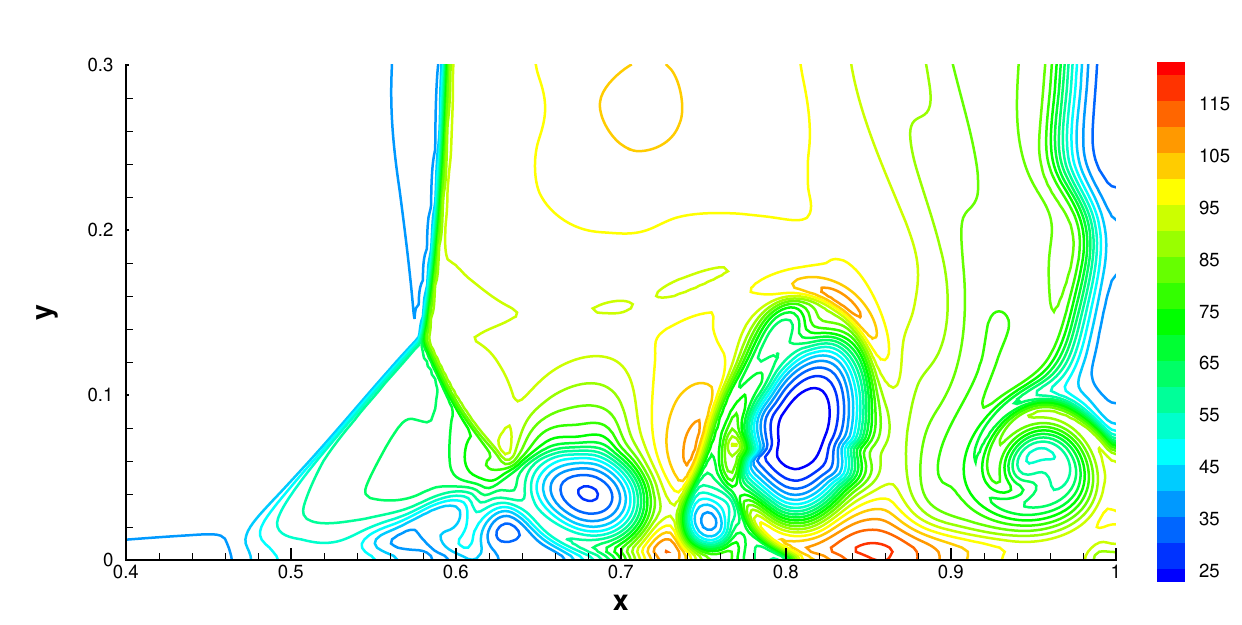}}
		\subfigure{	\includegraphics[width=0.6\linewidth]{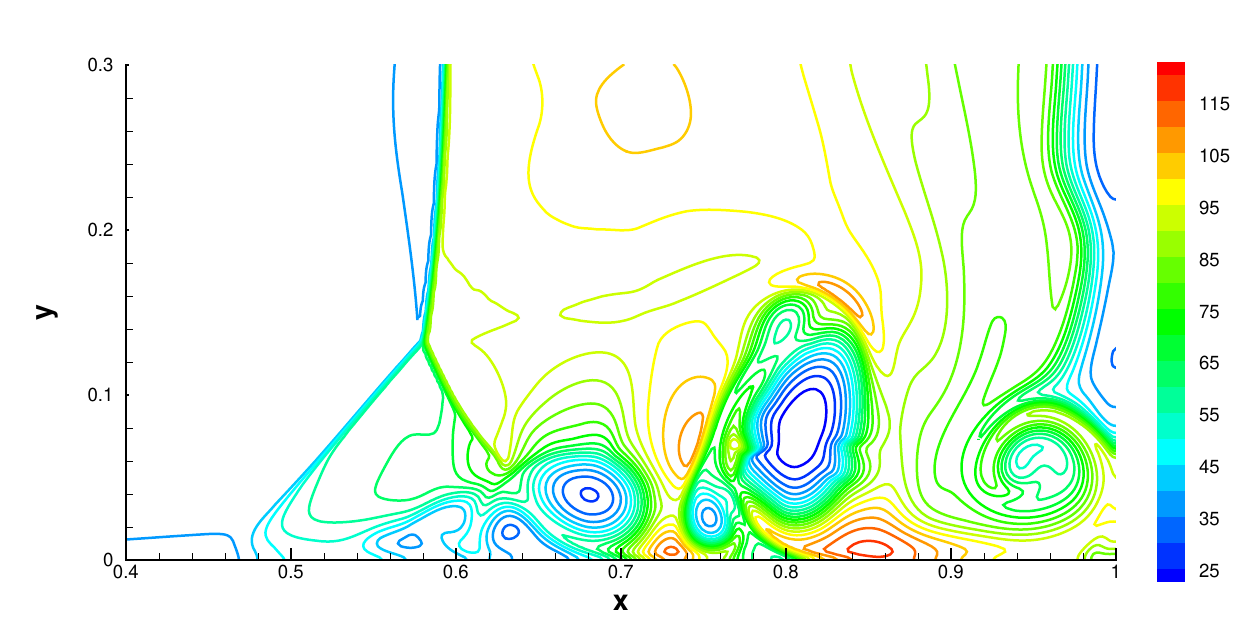}}
		\caption{\small Reflected shock-boundary layer interaction: density contours at $t=1.0$ with $\Delta x=\Delta y=1/400$ (top) and $\Delta x=\Delta y=1/600$ (bottom) for $Re=200$. 20 uniform contours from 25 to 120.}
		\label{fig:VSTP}
	\end{figure}
	\begin{figure}[htbp]
		\centering
		\setlength{\abovecaptionskip}{0.0cm}
		\setlength{\belowcaptionskip}{0.0cm}
		\includegraphics[width=0.5\linewidth]{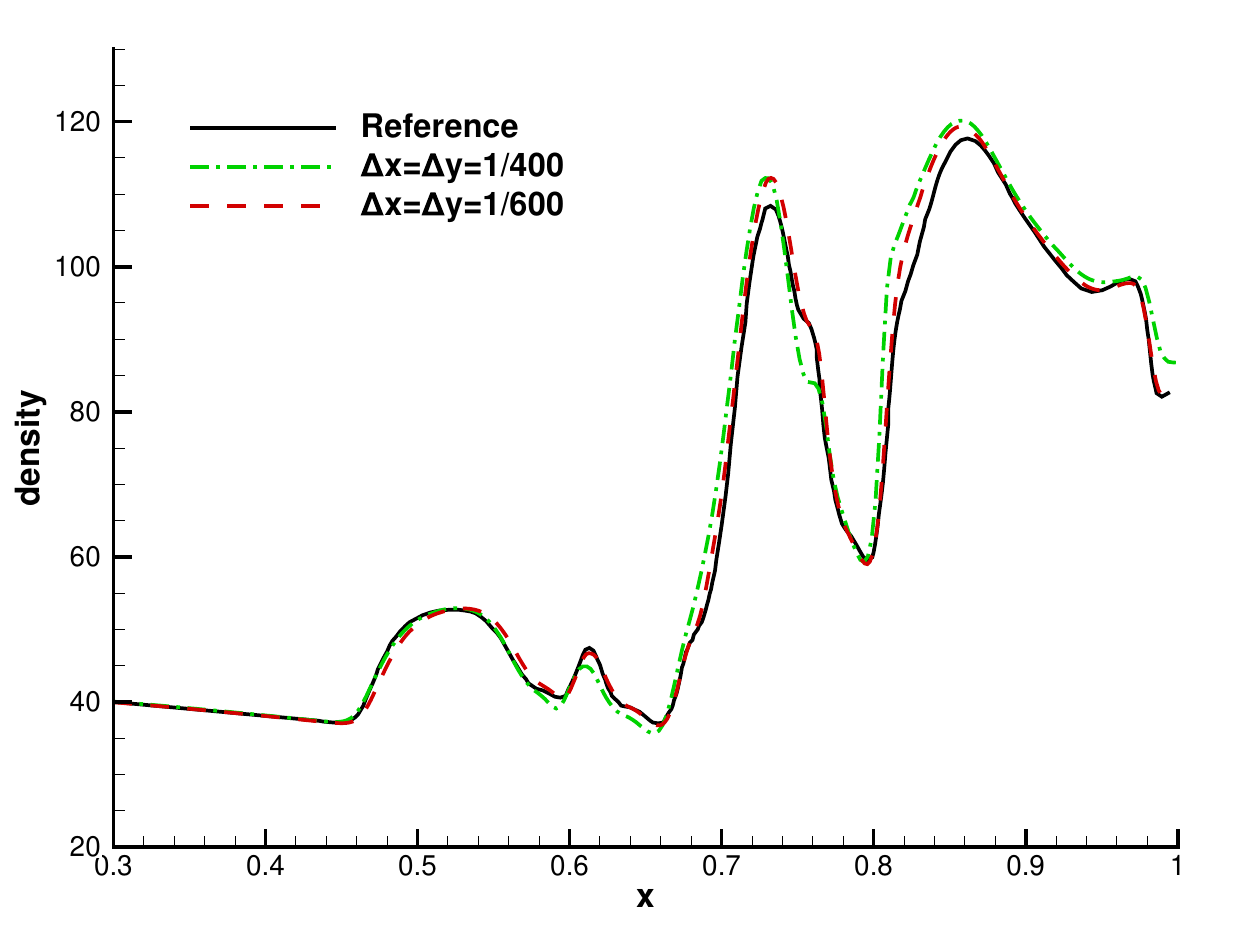}
		\caption{\small Reflected shock-boundary layer interaction: density along the bottom wall with different meshes for $Re=200$. The reference data are from the HGKS method \cite{ZXL2018}.}
		\label{fig:VSTPden}
	\end{figure}
\end{test}

\section{Discussions}
This paper proposes a relaxation model for general entropy dissipative system of viscous conservation laws via a flux relaxation approach. Our method ensures the hyperbolicity and the entropy dissipation property of the resulting system. The novelty of this paper lies in that our method provides an alternative way to design numerical schemes for general entropy dissipative system of viscous conservation laws such that the FV solvers developed for hyperbolic conservation laws can be employed as building blocks. For example, this paper develops second-order LW type relaxation schemes by using an IMEX-GRP method, see the scheme \eqref{exampleB0}--\eqref{exampleBend} for the 1-D viscous Burgers equation, the scheme in \Cref{example4} for the 1-D Navier-Stokes equations, and the scheme in \Cref{example5} for the 2-D Navier-Stokes equations. The above relaxation schemes are simple for implementation and enjoy the same advantages as the GRP method, such as the temporal-spatial coupling nature, relatively compact stencils and the multi-dimensionality. We point out that this approach can be applied to any entropy dissipative systems of viscous conservation laws, and can be extended to other formulations, e.g., the discontinuous Galerkin (DG) formulation. For relaxation schemes of 1-D scalar conservation laws, both the AP property and the dissipation property are shown.

Numerical examples are provided to validate the accuracy, robustness and high resolution of the current second-order relaxation schemes. Numerically, it is shown that with time step $\Delta t=O(\Delta x)$, the second-order accuracy can be achieved before the relaxation error of order $\ep$ becomes dominant. Moreover, the current second-order scheme exhibits good  computational performance for complex flow problems.

\appendix

\section{The 1-D Roe-type solver} \label{ROE}
This appendix includes the Roe-type solver for the Riemann problem 
\begin{equation}
	\begin{cases}
		\begin{aligned}
			&	U_t+F(U)_x = 0, \\
			&U(x,0)=\left.\begin{cases}
				U^l,\,\,\,\,\,\quad x<0,\\
				U^r,\,\,\,\,\,\quad x>0,
			\end{cases}	
			\right.
		\end{aligned}
	\end{cases}
\end{equation}
where $U$ and $F(U)$ are the same as in \eqref{CNS2} and \eqref{CNS3}, respectively, and $U^l$ and $U^r$ are the the limiting values of the initial data $U(x,0)$ on both sides of $(0,0)$. Let
\begin{equation}
	\tw^r=\tw(w^r),\,\,\,\tw^l=\tw(w^l),\,\,\,
	\br=\frac{\rho^l+\rho^r}{2},\,\,\,\bu=\frac{u^l+u^r}{2},
	\,\,\,\bE=\frac{E^l+E^r}{2},
\end{equation}
and define
\begin{equation}
	\widetilde{M}=\begin{pmatrix}
		0\,&  I_3\,\\
		\widetilde{B}^*P^{-1}\,& 0\,
	\end{pmatrix},\,\,\,
	\widetilde{B}^*=\begin{pmatrix}
		0\,&0\,&0\, \\
		0\,&\frac{4\mu}{3\epsilon}\,&0\,\\		
		0\,&\frac{4\mu}{3\epsilon}\bu\,&\frac{\kappa}{\epsilon}\,\\
	\end{pmatrix},\,\,\,
	P=\begin{pmatrix}
		1\,&0\,&0\, \\
		\bu\,&\br\,&0\,\\
		\bE\,&\br\bu\,&\frac{\br}{\gamma-1}\,\\
	\end{pmatrix}.
\end{equation}
Then, we can write
\begin{equation}
	F(U^r)-F(U^l)=\widetilde{M}(U^{r}-U^{l}),
\end{equation}
where $\widetilde{M}$ is indeed the so-called Roe matrix. Thus, a Roe-type solver as in \cite{Roe1981} takes
\begin{equation}
	\lim\limits_{t\rightarrow 0+}U(0,t)=\frac{1}{2}(U^r+U^l)-\frac{1}{2}\widetilde{R}
	\sign(\widetilde{\Lambda})(\widetilde{R})^{-1}(U^r-U^l),\label{1DRoe}
\end{equation}
where $\widetilde{R}$ is the matrix having the right eigenvectors of the Roe matrix $\widetilde{M}$ as its columns, and $\widetilde{\Lambda}$ is the diagonal matrix containing the eigenvalues.	

\section{Numerical treatment of boundary conditions} \label{boundarytreatment}
In the tests of the present paper, solid wall boundary conditions, such as the non-slip, isothermal and adiabatic boundary conditions, are treated by using the one-sided GRP (OS-GRP) solver \cite{LZ2022} as follows. With the 1-D problem \eqref{1Dsc2} as an example, suppose that $x=x_{-1/2}=0$ is a solid wall (boundary). For the Dirichlet boundary condition $u(x_{-1/2},t)=u_b(t)$, the boundary value $v(x_{-1/2},t_n)$ is evaluated by 
\begin{equation}
	\begin{aligned}
		&v^n_{-1/2}=f(u^n_{-1/2})-\mu\phi(u^n_{-1/2})\big(\frac{u^n_0-u^n_{-1}}{\Delta x}\big).\\
		&u^n_{-1}=2u^n_{-1/2}-u^n_0,\,\,\,\,\,u^n_{-1/2}=u_b(t_n).
	\end{aligned} 
\end{equation}
For the Neumann boundary condition $u_x(x_{-1/2},t)=0$, $v(x_{-1/2},t_n)$ is evaluated by
\begin{equation}
	v^n_{-1/2}=f(u^n_{-1/2}),\,\,\, \,\,\, u^n_{-1/2}=u^n_0.
\end{equation}
Then, once both of $u^n_{-1/2}$ and $v^n_{-1/2}$ are available, the numerical fluxes on the boundary can be obtained by using the acoustic version of the OS-GRP method, which showed significant benefits on avoiding spurious wave reflections at the computational boundaries \cite{LZ2022}. For the general multi-dimensional case, such a determination of the boundary conditions for the artificial variable $\bfv$ can be done componentwise. 

Other boundary conditions, such as the periodic, symmetric, in-flow and out-flow boundary conditions, are implemented with the help of the extrapolation technique.

\section*{Acknowledgments}
The first author would like to appreciate Bangwei She and Zhifang Du for their careful reading of the paper and detailed advice on improving the quality of the paper. The first author would also thank Liang Pan, Yue Wang, Xin Lei and Ang Li for their valuable suggestions during the discussions.


\begin{thebibliography}{99}
	\bibliographystyle{siamplain}
	\bibitem{A2010}
	A. Asaithambi, Numerical solution of the Burgers’ equation by automatic differentiation, Appl. Math. Comput. 216 (2010) 2700--2708.
	
	
	\bibitem{BCLC1997}
	P. Batten, N. Clarke, C. Lambert, D. Causon, On the choice of wave speeds in the HLLC Riemann solver, SIAM J. Sci. Stat. Comput. 18 (1997), 1553--1570.
	
	
	\bibitem{BF2003}
	M. Ben-Artzi, J. Falcovitz, Generalized Riemann Problems in Computational Gas Dynamics, Cambridge University Press, Cambridge, 2003.
	
	\bibitem{BL2007}
	M. Ben-Artzi, J. Li, Hyperbolic conservation laws: Riemann invariants and the generalized Riemann problem, Numer. Math., 106 (2007), 369--425.	
	
	\bibitem{BLW2006}
	M. Ben-Artzi, J. Li, G. Warnecke, A direct Eulerian GRP scheme for compressible fluid flows, J. Comput. Phys., 218 (2006), 19--43.
	
	\bibitem{CE1990}
	S. Chapman, T. Cowling, \textit{The mathematical theory of non-uniform gases. An account of the kinetic theory of viscosity, thermal conduction and diffusion in gases, Thermal Conduction and Diffusion in Gases}. Cambridge: Cambridge University Press; 1990.
	
	\bibitem{CX2015}
	S. Chen, K. Xu, A comparative study of an asymptotic preserving scheme and unified gas-kinetic scheme in continuum flow limit, J. Comput. Phys. 288 (2015), 52--65.
	
	
	\bibitem{CS1998}
	B. Cockburn and C. Shu, The local discontinuous Galerkin method for time-dependent convection-diffusion systems, SIAM J. Numer. Anal., 35 (1998), pp. 2440--2463.
	
	\bibitem{DY2022}
	M. Danis, J. Yan, A new direct discontinuous Galerkin method with interface correction for two-dimensional compressible Navier-Stokes equations, J. Comput. Phys. 452 (2022), 110904.
	
	\bibitem{DT2009}
	V. Daru, C. Tenaud, Numerical simulation of the viscous shock tube problem by using a high resolution monotonicity-preserving scheme, Computers \& Fluids 38 (2009), 664--676.
	
	\bibitem{DP2017}
	G. Dimarco, L. Pareschi, Implicit-explicit linear multistep methods for stiff kinetic equations, SIAM J. Numer. Anal., 55 (2017), 664--690.
	
	\bibitem{Ghia1982}
	U. Ghia, K. Ghia, C. Shin, High-Re solutions for incompressible flow using the Navier–Stokes equations and a multigrid method, J. Comput. Phys. 48 (1982), 387--411.
	
	
	\bibitem{GLX2023}
	Z. Guo, J. Li, K. Xu, Unified preserving properties of kinetic schemes, Phys. Rev. E 107, 025301, 2023.
	
	\bibitem{J1999}
	S. Jin, Efficient asymptotic-preserving (AP) schemes for some multiscale kinetic equations, SIAM J. Sci. Comput., 21 (1999), 441--454.
	
	\bibitem{JD1996}
	S. Jin, C. Levermore, Numerical schemes for hyperbolic conservation laws with stiff relaxation terms, J. Comput. Phys. 126 (2) (1996) 449--467.
	
	\bibitem{JX1995}
	S. Jin, Z. Xin, The relaxation schemes for systems of conservation laws in arbitrary space dimensions, Comm. Pure Appl. Math., 48 (1995), 235--277.
	
	\bibitem{L1983}
	E. Larsen, On numerical solutions of transport problems in the diffusion limit, Nucl. Sci. Eng. 83 (1983), 90--99.
	
	\bibitem{LMM1987}
	E. Larsen, J. Morel, W. Miller Jr., Asymptotic solutions of numerical transport problems in optically thick, diffusive Regimes, J. Comput. Phys. 69 (1987), 283--324.
	
	\bibitem{LW1960}
	P. Lax, B. Wendroff, Systems of conservation laws, Comm. Pure Appl. Math., XIII (1960), 217--237.
	
	\bibitem{LL2019}
	X. Lei, J. Li, Transversal effects of high order numerical schemes for compressible fluid flows, Applied Mathematics and Mechanics (English Edition), 40 (2019), 343--354.
	
	\bibitem{Liu1987}
	T. Liu, Hyperbolic conservation laws with relaxation, Comm. Math. Phys. 108 (1987), 153--175.
	
	
	
	\bibitem{LL2023}
	A. Li, J. Li, Lax-Wendroff solvers-based Hermite reconstruction for hyperbolic problems, Appl. Math. Comput. 447 (2023), 127915.
	
	\bibitem{LZ2022}
	J. Li, Q. Zhang, One-sided GRP solver and numerical boundary conditions for compressible fluid flows, J. Comput. Phys. 459 (2022), 111138.
	
	\bibitem{Li2019}
	J. Li, Two-stage fourth order: temporal-spatial coupling in computational fluid dynamics (CFD), Adv. Aerodyn. 1 (2019) 3.
	
	\bibitem{LD2016}
	J. Li, Z. Du, A two-stage fourth order time-accurate discretization for Lax-Wendroff type flow solvers I. Hyperbolic conservation laws, SIAM J. Sci. Comput. 38 (2016), A3046--A3069.
	
	\bibitem{LM2002}
	R. Lowrie, J. Morel, Methods for hyperbolic systems with stiff relaxation, Int. J. Numer. Meth. Fluids 40 (2002), 413--423.
	
	
	\bibitem{Nishikawa2011}
	H. Nishikawa, New-generation hyperbolic Navier-Stokes schemes: O(1/h) speed-up and accurate viscous/heat fluxes, in Proceedings of the 20th AIAA Computational Fluid Dynamics Conference, Honolulu, 2011, 2011--3043.
	
	
	\bibitem{PXLL2016}
	L. Pan, K. Xu, Q. Li, J. Li, An efficient and accurate two-stage fourth-order gas-kinetic scheme for the Euler and Navier-Stokes equations, J. Comput. Phys. 326 (2016), 197--221. 
	
	\bibitem{Roe1981}
	P. L. Roe, Approximate Riemann solvers, parameter vectors and difference schemes, J. Comp. Phys. 43 (1981), 357--372.
	
	
	\bibitem{Serre2010}
	D. Serre, The structure of dissipative viscous system of conservation laws, Physica D 239 (2010), 1381--1386.
	
	\bibitem{Sod1978}
	G. Sod, A survey of several finite difference methods for systems of nonlinear hyperbolic conservation laws, J. Comput. Phys. 27 (1978), 1--31.
	
	
	\bibitem{TT2006}
	E. Toro, V. Titarev, Derivative Riemann solvers for systems of conservation laws and ADER methods, J. Comput. Phys., 212 (2006), 150--165.
	
	\bibitem{UKS1984}
	T. Umeda, S. Kawashima, Y. Shizuta, On the decay of solutions to the linearized equations of electro-magneto-fluid dynamics, Japan J. Appl. Math. 1 (1984), 435--457.
	
	\bibitem{Xu2001}
	K. Xu, A gas-kinetic BGK scheme for the Navier-Stokes equations and its connection with artificial dissipation and Godunov method, J. Comput. Phys. 171 (2001) 289--335.
	
	\bibitem{WH1974}
	R. Warming, B. Hyett, The modified equation approach to the stability and accuracy analysis of finite-difference methods, J. Comput. Phys. 14 (1974) 159--179.
	
	\bibitem{vanLeer1979}
	B. van Leer, Towards the ultimate conservative difference scheme, V. J. Comp. Phys. 32 (1979) 101--136.
	
	\bibitem{ZXL2018}
	G. Zhou, K. Xu, F. Liu, Grid-converged solution and analysis of the unsteady viscous flow in a two-dimensional shock tube, Phys. Fluids 30 (2018) 016102.
\end{thebibliography}
\end{document}